\newtheorem{thm}{Theorem}
\newtheorem{lemma}{Lemma}
\newtheorem{cor}{Corollary}
\newtheorem{fact}{Fact}
\newtheorem*{thm-others}{Theorem}
\theoremstyle{remark}
\newtheorem*{remark}{Remark}
\newcommand{\RR}{\mathbb{R}}
\newcommand{\CC}{\mathbb{C}}
\newcommand{\ol}{\overline}
\newcommand{\p}{\partial}
\title{Zeros of harmonic polynomials, critical lemniscates and caustics}
\author{Dmitry Khavinson, Seung-Yeop Lee, Andres Saez}
\begin{document}
\begin{abstract}

In this paper we sharpen significantly several known estimates on the maximal number of zeros of complex harmonic polynomials.
We also study the relation between the
curvature of critical lemniscates and its impact on geometry of caustics and the number of zeros of harmonic polynomials.
\end{abstract}

\maketitle

\tableofcontents

\section{Introduction}

We concern ourselves in this paper with complex \emph{harmonic polynomials}, i.e., polynomials which admit a decomposition 
$$h(z) = p(z) + \ol{q(z)},$$ where $p=p_n$ and $q=q_m$ are analytic polynomials of degrees $n$ and $m$ respectively. An interesting open question is, for given $n$ and $m$, to find the maximal number of solutions to the equation $h(z)=0$, i.e., extending the Fundamental Theorem of Algebra to harmonic polynomials, see \cite{harmonious} and references therein. Throughout the paper we assume $n>m$, for the case $n=m$ could give an infinite solution set.

Wilmshurst \cite{Wil,Wil98}, in his doctoral thesis, proved the following:

\begin{thm-others}[Wilmshurst]

The equation $h(z) = 0$ has at most $n^2$ solutions.

\end{thm-others}

\noindent The proof of this result readily follows from Bezout's theorem \cite{Coo}.

Seeking to improve on this bound, Wilmshurst conjectured that the equation $h(z) = 0$ has at most $3n-2 + m(m-1)$ solutions. Khavinson and Swi\c{a}tek \cite{K-S} confirmed Wilmshurst's conjecture when $m=1$ using complex dynamics, and the bound was shown to be sharp by Geyer \cite{Gey}. However, as was shown by Lee, Lerario, and Lundberg \cite{LLL}, the conjecture is not true in general, for example, when $m=n-3$.  Also see \cite{HLLM} for many more counterexamples.

Our first theorem bounds the number of roots off the coordinate axes for the harmonic polynomials with real coefficients.

\begin{thm}\label{thm:1}
For a harmonic polynomial $h(z) = p_n(z) + \ol{q_m(z)}$ with real coefficients, the equation $h(z) = 0$  has at most $n^2 - n$ solutions that satisty $({\rm Re}\,z) ({\rm Im} z)\neq 0$.
\end{thm}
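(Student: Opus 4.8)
The plan is to exploit a symmetry argument arising from the real coefficients. Since $p_n$ and $q_m$ have real coefficients, the harmonic polynomial satisfies $\overline{h(\bar z)} = \overline{p_n(\bar z)} + q_m(\bar z) = \overline{p_n(\bar z)} + \overline{\,\overline{q_m(\bar z)}\,}$, so there is a natural conjugation symmetry: if $z_0$ is a solution, one expects $\bar z_0$ (or a related reflected point) to interact with the system in a controlled way. The key idea is that roots off both coordinate axes come in symmetric clusters, and these symmetries force the relevant polynomial system to have a common factor or a reduction in degree, which then lets Bezout's theorem give a bound strictly below Wilmshurst's $n^2$.

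\medskip

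Concretely, first I would write $z = x + iy$ and decompose the equation $h(z) = 0$ into its real and imaginary parts, obtaining two real polynomial equations $P(x,y) = 0$ and $Q(x,y) = 0$ in the real variables $x$ and $y$. Because the coefficients of $p_n$ and $q_m$ are real, these two curves will possess reflection symmetries across the axes $x = 0$ and $y = 0$: replacing $(x,y)$ by $(x,-y)$ or $(-x,y)$ should send the system to itself up to sign. The next step is to observe that the solutions on the coordinate axes, i.e. those with $xy = 0$, correspond to intersection points of $P$ and $Q$ lying on the lines $x=0$ and $y=0$. The goal is to show these axis-points account for enough of the Bezout count that the off-axis solutions can number at most $n^2 - n$.

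\medskip

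To extract the improvement over $n^2$, I would study the behavior of the curves $P=0$ and $Q=0$ at infinity, in the projective closure. The highest-degree terms of $h$ come from $p_n$, so $P$ and $Q$ each have degree $n$, and their leading forms are determined by the real and imaginary parts of $p_n(z)$. I expect that the two leading homogeneous parts share common factors forcing intersection points at infinity, or that the symmetry forces at least $n$ of the $n^2$ Bezout intersection points to lie either at infinity or on the axes. Counting these "forced" intersections and subtracting them from the total $n^2$ is what yields the clean bound $n^2 - n$ for the genuinely off-axis affine solutions.

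\medskip

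The main obstacle, I anticipate, is making the counting of intersections at infinity and on the axes rigorous and showing none of them are spuriously double-counted. In particular one must handle multiplicities carefully: a point at infinity or on an axis could absorb several units of the Bezout bound, and one must verify that the off-axis solutions with $({\rm Re}\,z)({\rm Im}\,z) \neq 0$ are simple enough (or counted with multiplicity) that the subtraction of exactly $n$ is both valid and tight. Establishing precisely that the leading forms of $P$ and $Q$ share a factor of the right degree — equivalently, that $p_n$ and $q_m$ having real coefficients produces a common component at infinity accounting for $n$ intersection points — is the crux, and I would expect to analyze the top-degree form $\mathrm{Im}\,(p_n(z))$ (a homogeneous polynomial in $x,y$ vanishing on $n$ lines through the origin) to locate exactly where these forced intersections occur.
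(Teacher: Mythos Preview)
Your plan has a real gap. The symmetry $(x,y)\mapsto(-x,y)$ does not hold in general: real coefficients give only the conjugation symmetry $z\mapsto\bar z$, i.e.\ $(x,y)\mapsto(x,-y)$. More seriously, the intersections-at-infinity idea fails outright: the top-degree forms of $A={\rm Re}\,h$ and $B={\rm Im}\,h$ are ${\rm Re}(a_n z^n)$ and ${\rm Im}(a_n z^n)$, whose zero sets are two \emph{disjoint} families of $n$ lines through the origin (the directions $\arg z=(k+\tfrac12)\pi/n$ versus $\arg z=k\pi/n$). The projective closures of $A=0$ and $B=0$ therefore do not meet on the line at infinity at all, and there is nothing to subtract there.

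The structural fact you are missing lies on the real axis, not at infinity: because every coefficient is real, each monomial in $B(x,y)={\rm Im}\,h(x+iy)$ carries an \emph{odd} power of $y$, so $y\mid B$. Writing $B=yB_1$ with $\deg B_1\le n-1$, every zero of $h$ with $y\ne0$ satisfies $A=B_1=0$, and B\'ezout now gives at most $n(n-1)=n^2-n$ such zeros (the curves share no component, since $h$ has finite valence and $A(x,0)=p_n(x)+q_m(x)\not\equiv0$). This already proves the theorem---indeed a slightly stronger statement, with ${\rm Im}\,z\ne0$ in place of $({\rm Re}\,z)({\rm Im}\,z)\ne0$.

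The paper takes a different route: rather than B\'ezout, it invokes D.\ Bernstein's theorem, which bounds the number of solutions with $xy\ne0$ by the \emph{mixed area} of the Newton polygons $\mathcal N_A$ and $\mathcal N_B$. The real-coefficient hypothesis confines $\mathcal N_B$ to the strip $1\le k\le n-1$ in the exponent plane (odd $y$-powers only) and $\mathcal N_A$ to even $y$-powers; after a real translation to make the corner coefficients nonzero, the mixed area computes to exactly $n^2-n$. Both arguments ultimately encode the same parity constraint on powers of $y$, but the paper's Newton-polygon machinery is heavier; your B\'ezout idea, once redirected from infinity to the factor $y$ in $B$, is actually the quicker path.
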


Our next two theorems provide lower bounds on the maximal number of roots.

\begin{thm}\label{thm:2}

For all $n>m$, there exists a harmonic polynomial  $h(z) = p_n(z) + \ol{q_m(z)}$ with at least $3n-2$ roots.

\end{thm}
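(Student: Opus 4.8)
The plan is to reduce the statement to the already-settled case $m=1$ and then to raise the degree of the anti-analytic part up to $m$ without destroying roots. By the theorem of Khavinson--Świątek together with Geyer's sharpness result quoted above, there is an analytic polynomial $p_n$ of degree $n$ for which the harmonic polynomial
\[
h_0(z) = p_n(z) + \overline{z}
\]
has exactly $3n-2$ zeros; here the anti-analytic part $q_1(z)=z$ has degree $1$. If $m=1$ this is already the desired example, so from now on I assume $2 \le m < n$.

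First I would record the infinitesimal structure of the zeros. Writing $z=x+iy$ and viewing $h$ as a smooth map $\mathbb{R}^2 \to \mathbb{R}^2$, a zero $z_0$ of $h = p_n + \overline{q_m}$ is nondegenerate precisely when the Jacobian
\[
J_h(z_0) = |p_n'(z_0)|^2 - |q_m'(z_0)|^2
\]
is nonzero; for $h_0$ this reads $J_{h_0}(z_0) = |p_n'(z_0)|^2 - 1 \neq 0$. The crucial claim is that all $3n-2$ zeros of Geyer's extremal $h_0$ are nondegenerate. To see this, suppose that the (finitely many) degenerate zeros lie on the caustic of $h_0$, where each simple fold has two preimages on one side and none on the other. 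Choosing a polynomial correction $\delta p$ of degree at most $n$ whose prescribed values at these finitely many points (interpolation) push each such value onto the two-preimage side, while every nondegenerate zero persists, one obtains $\tilde p_n = p_n + \delta p$ with $\tilde p_n + \overline{z}$ having strictly more than $3n-2$ zeros. This contradicts the sharp bound $3n-2$ for $m=1$, so each zero satisfies $|p_n'(z_0)| \neq 1$.

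With nondegeneracy in hand the deformation is routine. Consider the family
\[
h_t(z) = p_n(z) + \overline{\,z + t\,z^m\,}, \qquad t \in \mathbb{R},
\]
whose anti-analytic part $q_m(z) = z + t z^m$ has degree exactly $m$ for every $t \neq 0$. Since $n>m$, on a large circle $|z|=R$ the term $|p_n(z)|$ dominates $|z+tz^m|$ for all small $t$, so no zero escapes to infinity and all roots stay in a fixed bounded region. As $h_t$ depends smoothly on $(x,y,t)$ and each of the $3n-2$ zeros of $h_0$ is a transverse zero of the real $2\times2$ system $\operatorname{Re}h_t=\operatorname{Im}h_t=0$, the implicit function theorem produces, for every sufficiently small $t$, a zero of $h_t$ near each zero of $h_0$, and distinct zeros remain distinct. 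Thus for some $t_0\in(0,\varepsilon)$ the harmonic polynomial $h_{t_0}(z)=p_n(z)+\overline{q_m(z)}$ has the prescribed degrees and at least $3n-2$ zeros, which is the assertion.

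I expect the main obstacle to be the nondegeneracy claim in the second paragraph: one must rule out that Geyer's extremal configuration places a zero at a higher-order point of the caustic (a cusp rather than a simple fold), where the ``perturb-and-split'' count could break down, and one must verify that the interpolation correction can be carried out simultaneously at all degenerate zeros. I anticipate handling this either by invoking the structural stability of Geyer's construction directly (which yields $3n-2$ simple zeros), or by making the maximality argument quantitative so that the degree-raising deformation is applied only after all zeros have been arranged to be simple.
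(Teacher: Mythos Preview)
Your overall strategy coincides with the paper's: start from Geyer's extremal polynomial $h_0(z)=p_n(z)+\overline{z}$ with $3n-2$ zeros, argue that all of them are nondegenerate, and then perturb the anti-analytic part by a small multiple of $z^m$ so that each zero survives (your implicit-function-theorem step and the paper's Rouch\'e step are equivalent here). So the architecture is the same; the only substantive issue is the nondegeneracy step, which you correctly flag as the weak point.

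Your argument for nondegeneracy is not complete as written. The fold picture (``two preimages on one side, none on the other'') does give a local gain of one zero, but you have not excluded higher-order singular zeros where the local preimage count may not behave this way, and the interpolation correction $\delta p$ you propose need not be uniformly small away from the degenerate zeros, so the other $3n-2$ zeros may not all persist. More importantly, even granting a local two-for-one split, you must still check that the perturbed polynomial $p_n+\delta p + \overline{z}$ is \emph{regular} before invoking the $3n-2$ bound, since Khavinson--\'Swi\c{a}tek's bound is stated for regular polynomials; otherwise the contradiction does not close.

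The paper avoids the caustic geometry entirely and proves regularity (its Lemma~\ref{lem:regular}) by a counting argument of a different flavor. The key inputs are: (i) every zero of $p_n(z)+\overline{z}$ that is not sense-preserving attracts a critical point of $p_n$ under iteration of $z\mapsto \overline{p_n(z)}$, so there are at most $n-1$ such zeros; hence if $h_0$ has $3n-2$ zeros, at least $2n-1$ are sense-preserving; (ii) near any singular zero one can find a sense-preserving point $\zeta$ with $h_0(\zeta)$ arbitrarily small, and by Sarason's density lemma (the paper's Lemma~\ref{lem:dense}) $\zeta$ can be chosen so that $h_0-h_0(\zeta)$ is regular; (iii) this perturbed polynomial then has at least $2n$ sense-preserving zeros, contradicting the Khavinson--\'Swi\c{a}tek bound of $2n-1$ sense-preserving zeros for regular polynomials. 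This argument needs no analysis of fold versus cusp and no control on the size of the perturbation away from the singular zero; it is the piece you should substitute for your second paragraph.
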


\begin{thm}\label{thm:3} For all $n>m$, there exists a harmonic polynomial  $h(z) = p_n(z) + \ol{q_m(z)}$ with at least $m^2+m+n$ roots.
\end{thm}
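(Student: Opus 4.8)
The plan is to fix the analytic part as the monomial $p_n(z)=z^n$ and to build a suitable anti-analytic part $\overline{q_m(z)}$, exploiting the way the number of zeros splits according to orientation. First I would record the basic bookkeeping: for $h=p_n+\overline{q_m}$ with $n>m$ the map $h\colon\CC\to\CC$ has winding number $n$ over a large circle, since the term $z^n$ dominates there. Hence, if $0$ is a regular value and we write $N_+$ and $N_-$ for the numbers of sense-preserving ($|p_n'|>|q_m'|$) and sense-reversing ($|p_n'|<|q_m'|$) zeros, the argument principle for harmonic maps gives $N_+-N_-=n$, so that the total count is $N=N_++N_-=n+2N_-$. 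Thus it suffices to produce a harmonic polynomial of this type with $N_-=\tfrac{m(m+1)}{2}$ sense-reversing zeros, as this yields $N=n+m(m+1)=m^2+m+n$.

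Next I would localize the sense-reversing zeros. They all lie inside the critical lemniscate $\mathcal{L}=\{\,n|z|^{n-1}=|q_m'(z)|\,\}$, on which the Jacobian $|p_n'|^2-|q_m'|^2$ vanishes; equivalently, $N_-$ is the number of negatively oriented preimages of $0$ under $h$, i.e.\ the number of sheets of the caustic $h(\mathcal{L})$ covering the origin. The case $m=1$, $q_1(z)=z$, already displays the mechanism: $h(z)=z^n+\bar z$ vanishes at the origin and at the $n+1$ points of $|z|=1$ solving $z^{n+1}=-1$, giving $N=n+2$ with a single sense-reversing zero at $0$, in agreement with $N_-=\tfrac{1\cdot 2}{2}=1$.

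For general $m$ the task is to shape $q_m(z)=z^m+c_{m-1}z^{m-1}+\cdots+c_0$ so that the caustic of $h=z^n+\overline{q_m(z)}$ folds around the origin the maximal number of times, using the $m$ free coefficients to tune $\mathcal{L}$ and its image. The guiding principle, which is precisely the curvature and caustic analysis developed earlier in the paper, is that a caustic produced by such a map can acquire up to $\tfrac{m(m+1)}{2}$ nested negatively oriented sheets over a single target value; placing $0$ inside all of them (after, if necessary, absorbing a constant into $p_n$, which does not change its degree) yields $N_-=\tfrac{m(m+1)}{2}$. I would then verify that for the chosen coefficients $0$ is a regular value, so that every zero is simple and $N=n+2N_-$ holds exactly.

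The main obstacle is this realizability step: showing that the coefficients of $q_m$ can genuinely be chosen so that the caustic attains $\tfrac{m(m+1)}{2}$ negatively oriented sheets over the origin, rather than having those preimages merge or escape as complex solutions. Concretely, one must establish both an upper bound—that $N_-$ cannot exceed $\tfrac{m(m+1)}{2}$ for an anti-analytic part of degree $m$—and its attainment, by deforming from a degenerate configuration such as $q_m(z)=z^m$, where the origin is a single sense-reversing zero of local order $m$, and checking that the degeneracy splits into exactly the expected number of simple sense-reversing zeros while the $n$ sense-preserving zeros near $|z|=1$ persist. Controlling this splitting, together with the transversality that makes the orientation count exact, is where the cusp and curvature estimates for the critical lemniscate do the real work.
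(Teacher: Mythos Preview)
Your proposal has a genuine gap at the realizability step, and the tools you invoke do not close it. You correctly observe that $N=n+2N_-$, so it would suffice to force $N_-=\tfrac{m(m+1)}{2}$; but you give no mechanism that actually achieves this. The appeal to ``the curvature and caustic analysis developed earlier in the paper'' is misplaced: that analysis is Theorem~\ref{thm-3}, which is proved \emph{after} the present theorem, treats only components of $\Omega$ containing a \emph{single} zero of $f$, and concludes merely that such a component can carry \emph{two} zeros---nothing close to $\tfrac{m(m+1)}{2}$. Your proposed perturbation from $q_m(z)=z^m$ does not work either: for $h(z)=z^n+\bar z^m$ the origin is a singular zero with local winding number $-m$ (since $h\sim\bar z^m$ there), so under a small perturbation the zeros near $0$ satisfy $N_+^{\mathrm{loc}}-N_-^{\mathrm{loc}}=-m$; there is no reason a small perturbation within the $m$-dimensional family of degree-$m$ anti-analytic parts should produce $\tfrac{m(m+1)}{2}$ sense-reversing zeros rather than, say, $m$. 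Finally, the asserted upper bound ``$N_-$ cannot exceed $\tfrac{m(m+1)}{2}$'' is both unnecessary for the statement and false: the counterexamples to Wilmshurst's conjecture discussed in the paper already violate it.

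The paper's proof is entirely different and fully constructive. One sets $p_n=S+T$, $q_m=S-T$ with $S(z)=z^n$ and $T(z)=(z-b)^{m+1}\bigl(z^{n-m-1}+\cdots\bigr)$, the lower-order coefficients of $T$ being uniquely determined so that $S-T$ has degree $m$. Then $h(z)=0$ becomes $\mathrm{Re}\,S(z)=0=\mathrm{Im}\,T(z)$, and the zeros are precisely the intersections of $\Gamma_S=\{\mathrm{Re}\,z^n=0\}$ (which is $2n$ straight rays from the origin) with $\Gamma_T=\{\mathrm{Im}\,T=0\}$. From $b$ emanate $2m+2$ disjoint branches of $\Gamma_T$ heading to $2m+2$ distinct asymptotic directions among $\{e^{i\ell\pi/n}\}$; a direct combinatorial count of how many of the rays each such branch must cross yields at least $(m+1)^2$ intersections, and the remaining $n-m-1$ arcs of $\Gamma_T$ contribute at least one intersection each, giving $(m+1)^2+(n-m-1)=m^2+m+n$. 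No caustic or curvature geometry enters; the argument is a counting of crossings of explicit real-algebraic curves.
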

The above three theorems will be proved in Section \ref{sec:3proofs}.

\vspace{0.3cm}

\noindent{\it Remark.}

\begin{itemize}
\item[(i)] The reason why in Theorem \ref{thm:1} we only consider roots off the coordinate axes is the following.  Consider $p(z)=z^n + (z-1)^n$ and $q(z)=z^n - (z-1)^n$.  Then $h(z)=p(z)+\overline{q(z)}$ has $n^2$ number of roots including the root at $0$ with the multiplicity $n$.  In fact, this is the polynomial that Wilmshurst used (with a slight perturbation to split the multiple root at the origin) to show that the maximal bound $n^2$ is sharp.  This example shows that Theorem  \ref{thm:1} is sharp.   Theorem \ref{thm:1} also yields that the harmonic polynomial with real coefficients and with the maximal ($n^2$) number of roots should have at least $n$ roots on the coordinate axes as in the above example.   
\item[(ii)] Theorem \ref{thm:2} and Theorem \ref{thm:3} complement each other.  Theorem \ref{thm:2} is stronger than Theorem \ref{thm:3} when $m^2+m+2<2n$ and Theorem \ref{thm:3} is stronger than Theorem \ref{thm:2} when $m^2+m+2>2n$.
Also, Theorem \ref{thm:2} is not at all trivial since the argument principle for harmonic functions \cite{DHL,SS02,ST20} only yields that $h$ has at least $n$ zeroes (see Fact \ref{fact:n} in Section \ref{sec:proof1}).

\item[(iii)] Note that, compared to Wilmshurst's conjecture, Theorem \ref{thm:3} undercounts the number of roots by $2(n-m-1)$.  See Section \ref{sec:further} for the in-depth discussion.
\end{itemize}

Theorem \ref{thm:3} allows the following important corollary.
\begin{cor}\label{cor-Znm}
Let $Z_{n,m}$ denote the maximal possible number of zeros of $h=p_n+\overline{q_m}$.  Then, for any fixed integer $a\geq 1$, we have
$$\limsup_{n\to\infty}\frac{Z_{n,n-a}}{n^2}=1. $$
More generally, if $m= \alpha n + o(n)$ with $0\leq\alpha\leq 1$, we have
$$\limsup_{n\to\infty}\frac{Z_{n,m}}{n^2}\geq\alpha^2. $$
\end{cor}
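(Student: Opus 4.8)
The plan is to derive the corollary directly from Theorem~\ref{thm:3}, which asserts the existence, for every $n>m$, of a harmonic polynomial $h=p_n+\overline{q_m}$ with at least $m^2+m+n$ roots. Since $Z_{n,m}$ is defined as the \emph{maximal} number of zeros over all such harmonic polynomials, this existence statement immediately yields the lower bound $Z_{n,m}\ge m^2+m+n$ for all $n>m$.

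First I would treat the special case $m=n-a$ for fixed $a\ge 1$. Substituting $m=n-a$ into Theorem~\ref{thm:3} gives
\begin{equation*}
Z_{n,n-a}\ge (n-a)^2+(n-a)+n = n^2 - (2a-1)n + (a^2-a).
\end{equation*}
Dividing by $n^2$ and letting $n\to\infty$, the lower order terms vanish, so $\liminf_{n\to\infty} Z_{n,n-a}/n^2\ge 1$. For the reverse inequality I would invoke Wilmshurst's theorem (stated in the introduction), which gives $Z_{n,m}\le n^2$ for all $m$, hence $Z_{n,n-a}/n^2\le 1$ for every $n$. Combining the two bounds forces $\lim_{n\to\infty} Z_{n,n-a}/n^2 = 1$, and in particular the $\limsup$ equals $1$, proving the first assertion.

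For the general case $m=\alpha n + o(n)$ with $0\le\alpha\le 1$, I would again apply the bound $Z_{n,m}\ge m^2+m+n$. Dividing by $n^2$ yields
\begin{equation*}
\frac{Z_{n,m}}{n^2}\ge \frac{m^2}{n^2} + \frac{m+n}{n^2}.
\end{equation*}
Writing $m/n = \alpha + o(1)$, the first term tends to $\alpha^2$, while the second term is $O(1/n)$ and vanishes. Taking $\limsup$ as $n\to\infty$ therefore gives $\limsup_{n\to\infty} Z_{n,m}/n^2 \ge \alpha^2$, as claimed. The argument is entirely routine once Theorem~\ref{thm:3} is in hand; the only mild care needed is that the sequence of admissible degree pairs $(n,m)$ must satisfy $n>m$, which is guaranteed for large $n$ whenever $\alpha<1$, and in the boundary case $\alpha=1$ is covered by the first part (e.g.\ $m=n-a$). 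I do not anticipate any genuine obstacle here—the substance of the corollary lives entirely in Theorem~\ref{thm:3}, and this proof is simply the asymptotic bookkeeping that extracts the stated density statements from that lower bound.
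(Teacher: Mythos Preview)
Your proof is correct and follows exactly the same route as the paper: invoke the lower bound $Z_{n,m}\ge m^2+m+n$ from Theorem~\ref{thm:3}, substitute, and read off the asymptotics (the paper leaves the upper bound $Z_{n,m}\le n^2$ from Wilmshurst's theorem implicit, while you spell it out). One harmless arithmetic slip: $(n-a)^2+(n-a)+n = n^2 - (2a-2)n + (a^2-a)$, not $n^2-(2a-1)n+(a^2-a)$; the leading term is unaffected.
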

\begin{proof} From Theorem \ref{thm:3} we have $Z_{n,m}\geq n^2+2(1-a) n + a(a-1)$ for the first case, and $Z_{n,m}\geq \alpha^2 n^2+o(n^2)$ for the second case.
\end{proof}

This corollary yields that the maximal number of roots is asymptotically given by Wilmshurst's theorem.
This answers a question posed by the first author more than a decade ago. Also this corollary complements the estimates on the expected number of zeros of Gaussian random harmonic polynomials obtained by Li and Wei in \cite{Li-Wei} and, more recently, by Lerario and Lundberg in \cite{LL}.  For example, for $m=\alpha n$, $\alpha <1$, the expected number of zeros is $\sim n$ (\cite{Li-Wei}) for the Gaussian harmonic polynomials and $\sim c_\alpha n^{3/2}$ \cite{LL} for the truncated Gaussian harmonic polynomials. Yet, Corollary \ref{cor-Znm} yields that among all harmonic polynomials the maximal number $\sim \alpha n^2$ of zeros occurs with positive probability, thus expanding further the results in \cite{Ble} for $m=1$.

To state our last theorem, we have to introduce the set
\begin{equation}\label{eq:Omega}
 \Omega = \{z : \lvert f(z)\rvert <1 \},
 \end{equation}
 where
 $$f(z)=\frac{p'_n(z)}{q_m'(z)}.$$
Recall that the mapping $z\mapsto h(z)$ is {\em sense-reversing} precisely on $\Omega$, i.e. the Jacobian of the map $h$ is negative on $\Omega$.  The boundary $\partial\Omega$ is the {\em lemniscate} $\{z:|f(z)|=1\}$.

Each connected component of $\Omega$ must contain at least one critical point of $p_n$. Indeed, if there is a connected component {\em without} a critical point, by applying the maximum modulus principle to $f(z)$ and $1/f(z)$ with $|f(z)|=1$ on the boundary of that component, we have that $f$ is a unimodular constant, a contradiction.

 This implies that there are at most $\deg p_n'=n-1$ connected components of $\Omega$.

For $m=1$ (when Wilmshurst's conjecture was proven to hold \cite{K-S}), Wilmshurst guessed (\cite{Wil}, p.73) that the following might be true: ``{\em In each component of $\Omega$ where $h(z)=p_n(z)+\overline z$ is sense reversing, the behavior of $h$ will be essentially determined by the $\overline z$ term so there will only be one zero of $h$}''.
From that the maximal number of roots (i.e., $3n-2$ for $m=1$) may be obtained by the argument principle when each connected component of $\Omega$ contains a zero.

The statement above is true
when the component of $\Omega$ is {\em convex} according to the following result (see \cite{Wil}, p.75).

\begin{thm-others}[Sheil-Small]
 If $g(z)$ is an analytic function in a {\em convex} domain $D$ and $|g'(z)|<1$ in $D$, then $\overline z+g(z)$ is injective on $D$.
\end{thm-others}

Note that the theorem relates the geometry of critical lemniscates with the number of zeros, because $D$ can have at most one zero of the function $\overline z+g(z)$ if the latter is injective.
In a {\em non-convex} component of $\Omega$, it is possible to have more than one zero of $h(z)$.  In \cite{Wil} an example is given where a non-convex component of $\Omega$ contains {\em two} critical points of $p_n$ and {\em two} zeros of $h(z)=p_n(z)+\overline z$.   This is not surprising because each component of $\Omega$ can have a zero of $h$ and, therefore, one can have two zeros in a component by merging two components into one.  The resulting component then has two critical points of $p_n$.      It was however not clear whether a connected component containing a single critical point of $p_n$ could possibly have more than one zero of $h$.

Here we show that it {\em is indeed} possible and, also,  present a necessary and sufficient condition for having more than one zero of $h$ in a connected component of $\Omega$ that contains a {\em single} zero of $f$ (i.e.,  a single critical point of $p_n$). The theorem holds for general $m$ and $n$.

\begin{thm}\label{thm-3} Let $n>m$. Let $D$ be a connected component of $\Omega$ (defined by \eqref{eq:Omega}) containing exactly one zero of $f$.  On a smooth part of the curve $q_m(\partial D)$ (the image of $\partial D$ under $q_m$) let $\kappa$ be the curvature of $q_m(\partial D)$ with respect to the counterclockwise arclength parametrization of $\partial D$. Then, the following are equivalent:
\begin{itemize}
\item[i)] Let $f(z)=p_n'(z)/q_m'(z)$.
There exists $z\in\partial D$ such that $$\frac{\kappa(z)}{|f'(z)|}<-\frac{1}{2}.$$
\item[ii)] There exists $\theta\in {\mathbb R}$ and $A\in{\mathbb C}$ such that $$\widetilde p_n(z)-\overline{q_m(z)}$$ has at least two zeros in $D$, where
$\widetilde p_n(z)= e^{i\theta} p_n(z)+A$.
\end{itemize}
\end{thm}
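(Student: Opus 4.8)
The plan is to convert the zero-counting in (ii) into a winding-number computation and to read off both the cusp structure and the curvature condition (i) from a single differential identity along $\partial D$.

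First I would reduce (ii) to a statement about one fixed curve. Since $\widetilde p_n{}' = e^{i\theta}p_n'$, the function $f$, the lemniscate, the region $\Omega$ and the component $D$ (with its unique critical point) are unchanged when $p_n$ is replaced by $\widetilde p_n$, and $\widetilde h=\widetilde p_n-\overline{q_m}$ is again sense-reversing on $D$ (its Jacobian is $|p_n'|^2-|q_m'|^2<0$ there). Hence, by the argument principle for harmonic functions, the number of zeros of $\widetilde h$ in $D$ equals $-\mathrm{wind}(\widetilde h(\partial D),0)$ whenever $0\notin\widetilde h(\partial D)$. The constant $A$ merely translates the image curve, so if I set $\gamma_\theta(s)=e^{i\theta}p_n(z(s))-\overline{q_m(z(s))}$, where $s\mapsto z(s)$ is the counterclockwise arclength parametrization of $\partial D$, then (ii) is equivalent to: \emph{there is a $\theta$ for which $\gamma_\theta$ has a point of winding number $\le-2$.}

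Next comes the key computation. On $\partial D$ we have $|f|=1$, so write $f=e^{i\phi}$; because the outward normal of $\{|f|=1\}$ points along $\overline{f'}f$, the counterclockwise unit tangent is $z'(s)=i\,\overline{f'}f/|f'|$, and a short calculation gives $d\phi/ds=|f'|>0$. Writing $W(s)=q_m(z(s))$ and $\psi=\arg W'(s)$ (so that $\kappa=d\psi/ds$ is the curvature in the theorem), and using $|p_n'|=|q_m'|$ on $\partial D$, I would expand $\gamma_\theta'(s)=e^{i\theta}p_n'z'-\overline{q_m'}\,\overline{z'}$ and factor it as
\[
\gamma_\theta'(s)=2i\,|q_m'(z)|\,e^{i(\theta+\phi)/2}\sin\Theta,\qquad \Theta:=\psi+\tfrac{\theta+\phi}{2}.
\]
Thus the cusps of $\gamma_\theta$ are exactly the points where $\sin\Theta=0$, the tangent direction is $\tfrac\pi2+\tfrac{\theta+\phi}{2}$ modulo $\pi$, and
\[
\frac{d\Theta}{ds}=\kappa+\tfrac12|f'|=|f'|\Big(\frac{\kappa}{|f'|}+\tfrac12\Big).
\]
Consequently (i) holds if and only if $\Theta$ fails to be monotone increasing.

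Then I would settle the topology from the winding of the tangent. Since $D$ contains exactly one zero of $f$ and (because $|f|<1$ in $D$) no pole of $f$, one has $\Delta_{\partial D}\phi=2\pi$ and, as $q_m'$ has no zero in $D$, $\Delta_{\partial D}\psi=2\pi$; hence $\Delta_{\partial D}\Theta=3\pi$. If $\Theta$ is monotone (case not-(i)), then for generic $\theta$ it meets $\pi\mathbb{Z}$ in exactly three points, so $\gamma_\theta$ is a deltoid-type curve whose tangent base-direction $\tfrac\pi2+\tfrac{\theta+\phi}{2}$ advances monotonically; such a curve has maximal winding number $1$ in absolute value, so (ii) fails for every $\theta$, which gives (ii)$\Rightarrow$(i). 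If instead $\Theta$ is non-monotone (case (i)), its graph has a dip, and I would choose $\theta$ so that the shifted level set $\{\Theta\in\pi\mathbb Z\}$ meets this dip, producing two extra cusps (five in total); because the base direction still advances monotonically, the two extra arcs curl in the same sense and bound a doubly-covered region, so $\gamma_\theta$ attains winding number $-2$ somewhere, and choosing $A$ to place $0$ in that region yields two zeros, i.e. (ii).

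The main obstacle is the last step: turning the purely local sign change $d\Theta/ds<0$ into a genuine doubly-wound region of the global curve $\gamma_\theta$. I expect to have to argue carefully that the monotonic advance of the base direction forces all local loops created at cusps to have the same orientation (so that windings add rather than cancel), to verify the maximal-winding-$1$ claim in the monotone case, and to handle the degenerate situations the statement already anticipates -- non-smooth points of $q_m(\partial D)$ where $p_n'=q_m'=0$, coalescing cusps for non-generic $\theta$, and the passage from "winding $\le -2$" to "at least two zeros" (via a small perturbation when the two zeros would otherwise coincide).
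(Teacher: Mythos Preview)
Your proposal is correct and follows essentially the same route as the paper: the paper parametrizes $\partial D$ by $\theta=\arg f$ rather than arclength (so your $\Theta$ is their $\Psi$ up to the change of variable $d\theta=|f'|\,ds$), derives the identical identity $d\Psi/d\theta=\kappa/|f'|+\tfrac12$, and then argues exactly as you do that (i) $\Leftrightarrow$ $\Psi$ non-monotone $\Leftrightarrow$ one can force five or more cusps by a rotation $p_n\mapsto e^{i\varphi}p_n$. The ``main obstacle'' you flag is handled in the paper by a short tangent-rotation lemma: with three cusps each smooth arc has tangent turning at most $\pi$, so adjacent arcs only separate and $h(\partial D)$ is Jordan, while with $\ge 5$ cusps the smoothed curve has total tangent rotation $\le -4\pi$, forcing a self-intersection and hence a pair of complementary regions whose winding numbers differ by $2$.
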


\begin{remark}{\hspace{0.1cm}}
\begin{itemize}
\item[(a)] Note that $\kappa$ is the curvature of $\partial D$ when $q_m(z)=z$.
In this case, the theorem tells us exactly how ``non-convex'' the domain $D$ needs to be in order to have multiple zeros of $h$, improving upon Sheil-Small's theorem.

\item[(b)] In statement (ii) of Theorem \ref{thm-3}, we note that
$|\widetilde p'_n(z)|=| p'_n(z)|$.
The corresponding lemniscate, $\{z:|\widetilde p_n'(z)/q'_m(z)|=1\}$, is therefore, the same for all $\theta$ and $A$.
\end{itemize}
\end{remark}

For $(n,m)=(4,1)$, we provide an example where a component of $\Omega$ with one critical point of $p_n$ contains two zeros of $h$, see Figure \ref{fig:41concave}.  Note that the roots appear where $\Omega$ is (slightly) concave.  The example is produced based on the discussion following our final Theorem \ref{lem-no-inflec} in Section \ref{sec:nonconvex} regarding the shapes of critical lemniscates.

\begin{figure}[h]
\includegraphics[width=0.6\textwidth]{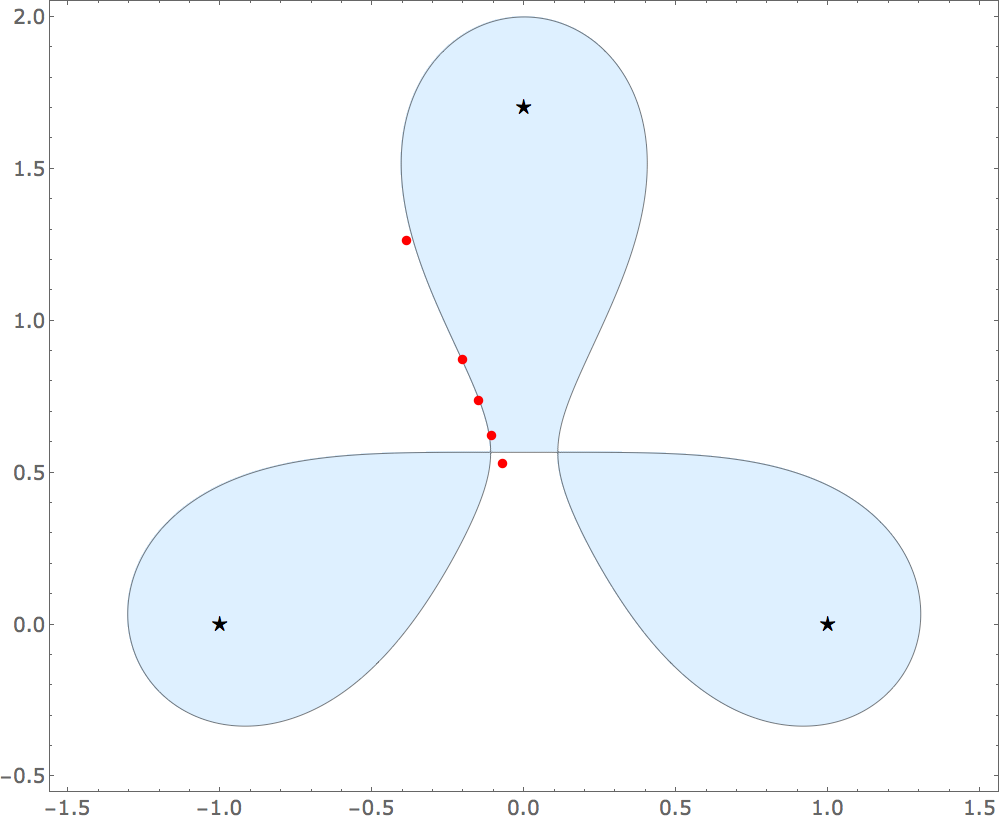}\qquad
\includegraphics[width=0.255\textwidth]{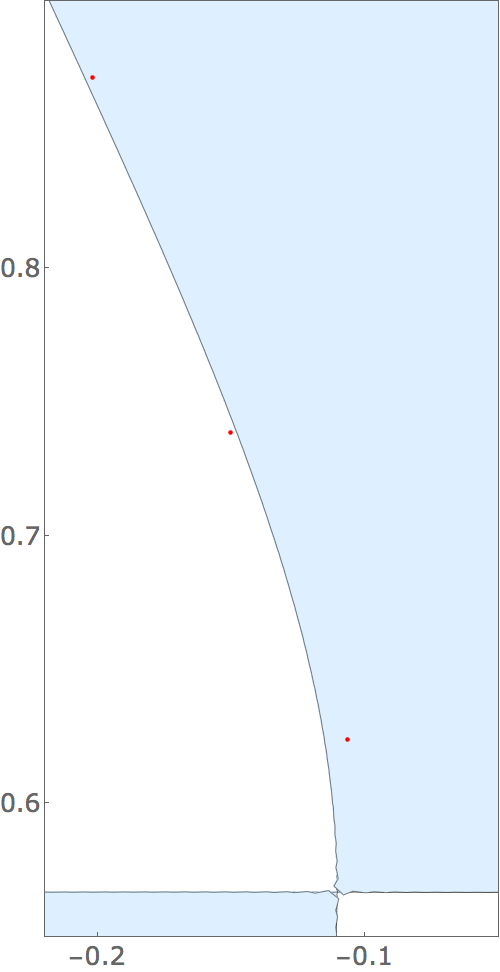}
\caption{\label{fig:41concave} Roots (dots) for  $h(z)=-(0.934124 +0.356949 i) z^*+(0.0581623 +0.156514 i) z^4+(0.354765-0.131835 i) z^3-(0.116325 +0.313028 i) z^2-(1.06429 -0.395504 i)
   z+(0.247627\, +0.020994 i)$. The stars are for the zeros of $f$ and the shaded region is $\Omega$.  The right picture is a zooming image of the top component of $\Omega$.     }
\end{figure}

\noindent{\bf Acknowledgement.} This work resulted from an REU group discussions that also included Prof. Catherine B\'en\'eteau and Brian Jackson.  The second author was supported by Simons Collaboration Grants for Mathematicians.  The first and third authors were supported by the USF Proposal Enhancement Grant No. 18326 (2015), PIs: D. Khavinson and R. Teodorescu.  We are greatly indebted to the referee for pointing out an error in the  initial version of Theorem \ref{thm:1} that was due to a misinterpretation of D. Bernstein's theorem.

\section{Proofs of Theorems \ref{thm:1}, \ref{thm:2} and \ref{thm:3}}\label{sec:3proofs}

\subsection{Proof of Theorem \ref{thm:1}}

Given a bivariate real polynomial $P$ defined by
\begin{equation*}
	P(x,y)=\sum\limits_{i=0}^n \sum\limits_{j=0}^m a_{ij}x^iy^j,\quad a_{ij}\in\RR,
\end{equation*}
the {\em Newton polygon ${\mathcal N}_P$ of $P$} is the convex hull of $N_P\subset {\mathbb R}^2$ where $N_P= \left\{(i,j) : a_{ij}\neq0\right\}$.

Given two polynomials $P$ and $Q$ with Newton polygons $\mathcal{N}_P$ and $\mathcal{N}_Q$, let $\mathcal{M}_{P,Q}$ be the {\em Minkowski sum} of $\mathcal{N}_P$ and $\mathcal{N}_Q$, defined by
$$\mathcal{M}_{P,Q} = \{(i_1+i_2,j_1+j_2) \vert (i_1,j_1) \in \mathcal{N}_P, (i_2,j_2) \in \mathcal{N}_Q\}.$$ Let $[X]$ denote the area of a set $X \subset \RR^2$. We then define the \emph{mixed area} of $P$ and $Q$ as $[\mathcal{M}_{P,Q}] - [\mathcal{N}_P] - [\mathcal{N}_Q]$.

\begin{thm-others}[D. Bernstein; cf. \cite{Ber3}, or the original articles \cite{Ber,Ber2}]

The number of solutions to the system of polynomial equations $p(x,y)=q(x,y)=0$ satisfying $x y \neq 0$ does not exceed the mixed area of $p$ and $q$.

\end{thm-others}

For any analytic polynomials $p_n(z)$ and $q_m(z)$ of degree $n,m$, we have
  $$h(x+iy)=p_n(x+iy)+\ol{q_m(x+iy)} = A(x,y)+iB(x,y),$$ where $A$ and $B$ are polynomials with real coefficients. Let $\mathcal{A}$ be the Newton polygon of $A(x,y)$ and $\mathcal{B}$ be the Newton polygon of $B(x,y)$.  As we will see below, the polynomials $p_n$ and $q_n$ having real coefficients leads to a certain structure of ${\mathcal A}$ and ${\mathcal B}$.

\begin{lemma}\label{lem:newton-poly}
Given a generic $h(z)$ with only real coefficients let ${\mathcal A}$ and ${\mathcal B}$ be defined as above.
If $n$ is even, $\mathcal{A}$ is the isosceles triangle with vertex set $\{(0,0),(0,n),(n,0)\}$ and $\mathcal{B}$ is the trapezoid with vertex set $\{(0,1),(0,n-1),(1,n-1),(n-1,1)\}$. If $n$ is odd, $\mathcal{A}$ is the trapezoid with vertex set $\{(0,0),(0,n-1),(1,n-1),(n,0)\}$ and $\mathcal{B}$ is the isosceles triangle with vertex set $\{(0,1),(0,n),(n-1,1)\}$ (see Figure 2).
\end{lemma}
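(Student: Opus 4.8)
The plan is to remove the conjugation using the reality of the coefficients, read off the supports $N_A$ and $N_B$ from the binomial theorem, and then take convex hulls separately in the cases $n$ even and $n$ odd. First I would use that all coefficients are real to write $\overline{q_m(x+iy)}=q_m(\overline{x+iy})=q_m(x-iy)$, so that
$$h(x+iy)=p_n(x+iy)+q_m(x-iy).$$
Expanding $p_n(z)=\sum_{k=0}^{n}a_k z^k$ and $q_m(z)=\sum_{k=0}^{m}b_k z^k$ through $(x\pm iy)^k=\sum_{j}\binom{k}{j}(\pm i)^j x^{k-j}y^j$ and separating real and imaginary parts, the decisive observation is that $i^j$ is real when $j$ is even and purely imaginary when $j$ is odd, while $i\mapsto -i$ fixes the even-$j$ terms and negates the odd-$j$ ones. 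Hence $A=\mathrm{Re}\,h$ is supported on monomials $x^i y^j$ with $j$ even and $B=\mathrm{Im}\,h$ on those with $j$ odd.

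Next I would pin down the two supports exactly. A monomial of total degree $i+j=k$ can arise only from the $z^k$ terms of $p_n$ (when $k\le n$) and of $q_m$ (when $k\le m$); for generic real coefficients none of these contributions cancel, and since $\deg q_m=m<n$ the $q_m$ terms produce no support points outside the triangle $\{i,j\ge 0,\ i+j\le n\}$. This gives
$$N_A=\{(i,j):\ j\ \text{even},\ i,j\ge 0,\ i+j\le n\},\qquad N_B=\{(i,j):\ j\ \text{odd},\ j\ge 1,\ i\ge 0,\ i+j\le n\}.$$
I would emphasize that the top-degree monomials, those with $i+j=n$, come solely from the leading term $a_n z^n$ of $p_n$, so they are present as soon as $a_n\neq 0$, independently of genericity.

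Finally I would compute the convex hulls. Since each support is the set of lattice points in the triangle $\{i,j\ge 0,\ i+j\le n\}$ subject to a single parity condition on $j$, its hull is controlled by the extreme admissible values of $j$. For $A$ the largest even $j$ is $n$ when $n$ is even, forcing $i=0$ at the top vertex $(0,n)$ and yielding the triangle $(0,0),(0,n),(n,0)$; it is $n-1$ when $n$ is odd, producing the top edge from $(0,n-1)$ to $(1,n-1)$ and the trapezoid $(0,0),(0,n-1),(1,n-1),(n,0)$. For $B$ the smallest odd $j$ is always $1$, while the largest is $n-1$ when $n$ is even (trapezoid $(0,1),(0,n-1),(1,n-1),(n-1,1)$) and $n$ when $n$ is odd (triangle $(0,1),(0,n),(n-1,1)$). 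Matching these extreme lattice points against the stated vertex sets then settles all four cases.

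The only genuinely delicate point, and the one I would treat most carefully, is the no-cancellation claim underlying the genericity hypothesis: I must verify that the coefficient attached to each claimed vertex is a nontrivial real expression in the $a_k$ and $b_k$ (in fact a nonzero multiple of $a_n$ at the top vertices), so that for generic coefficients it does not vanish and the convex hull is \emph{exactly} the stated polygon rather than a proper subpolygon. Everything else is bookkeeping of binomial coefficients together with the parity case-split.
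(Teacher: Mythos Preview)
Your argument is correct and shares the paper's key observation: because the coefficients are real, the parity of the exponent of $y$ decides whether a monomial lands in $A$ or in $B$, so the supports sit in the lattice sets $\{j\text{ even},\,i+j\le n\}$ and $\{j\text{ odd},\,i+j\le n\}$ respectively, and the convex hulls are exactly the stated triangle/trapezoid according to the parity of $n$.

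The one place where the two proofs diverge is the treatment of the vertex coefficients. You invoke genericity abstractly: each vertex coefficient is a nontrivial affine expression in the $a_k,b_k$ (e.g.\ $a_0+b_0$ at $(0,0)$, $a_1-b_1$ at $(0,1)$, $\pm a_{n-1}$ at $(0,n-1)$, and nonzero multiples of $a_n$ along $i+j=n$), hence nonzero off a proper algebraic subset. The paper instead replaces $h$ by a real translate $\widetilde h(z)=h(z+x_0)$ and chooses $x_0\in\RR$ so that $h(x_0)$, $\partial_y h|_{y=0}(x_0)$ and $\partial_y^{\,n-1}h|_{y=0}(x_0)$ are all nonzero, which forces the low-degree vertices of $\widetilde h$ to be present. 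Your route is cleaner for the lemma as stated (about generic $h$); the paper's translation trick buys a little more, namely that \emph{every} real-coefficient $h$ can be brought, by a real shift, to one with the full Newton polygons---a normalization that feeds directly into the application to Theorem~\ref{thm:1}.
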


\begin{figure}
\includegraphics[scale=0.2]{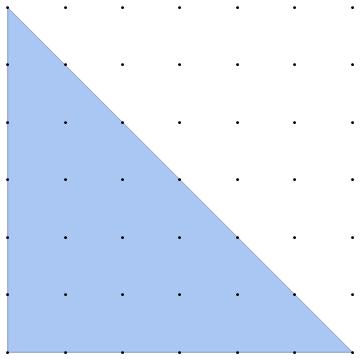} \hspace{30pt}
\includegraphics[scale=0.2]{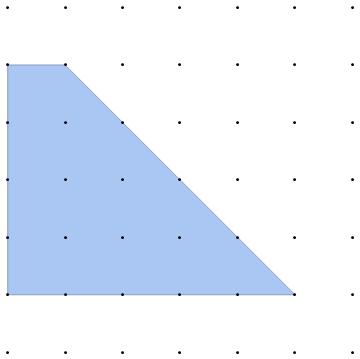}\\ \vspace{30pt}
\includegraphics[scale=0.2]{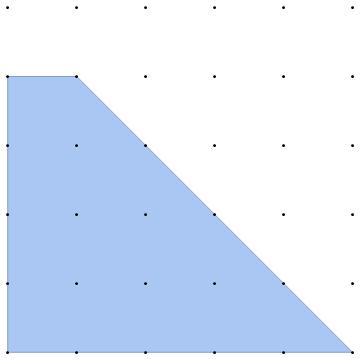}\hspace{30pt}
\includegraphics[scale=0.2]{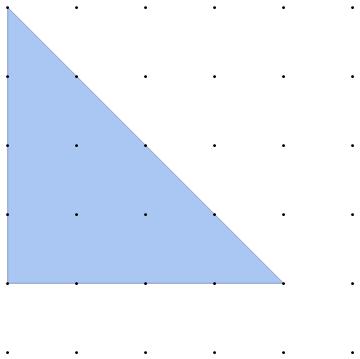}
\caption{The Newton polygons $\mathcal{A}$ (left) and $\mathcal{B}$ (right) for $n=6$ (top) and $n=5$ (bottom), respectively.}
\end{figure}

\begin{proof}
The lemma follows directly from the fact that $$A(x,y) = \sum_{ \substack{j+k \leq n,\\ k\text{ is even}} } a_{jk}x^jy^k, \qquad B(x,y) = \sum_{\substack{j+k \leq n,\\ k\text{ is odd}}} b_{jk}x^jy^k , \qquad a_{jk}, b_{jk} \in \RR.$$
The condition, $j+k\leq n$, on the summation indices follows from $\deg h=\deg p_n =n$.
The condition on $k$ to be even or odd comes from  $p_n$ and $q_m$ having real coefficients.
(Note that in the expansion of $p_n(x+iy)$ or, of $q_m(x+iy)$, the ``$i$'' comes only together with $y$ -- for a term of the form $x^j y^k$, the power of ``$i$'' is given by $k$, the power of $y$.)

We need to show that all the coefficients that correspond to the extreme points of the convex hulls are nonvanishing.
First of all, those that satisfy $j+k=n$:
\begin{equation*}
\begin{split}
&\text{$a_{0,n}$, $a_{n,0}$, $b_{1,n-1}$ and $b_{n-1,1}$ for even $n$,}\\ &\text{$a_{1,n-1}$, $a_{n,0}$, $b_{0,n}$ and $b_{n-1,1}$ for odd $n$,}
 \end{split}
 \end{equation*}
are all nonvanishing because $\deg p_n=n$.

Using $h(z)=p_n(z)+\overline{q_m(z)}$, we note that
$$h(x), \quad \frac{\partial}{\partial y}h(x+i y)\bigg|_{y=0}, \quad \frac{\partial^{n-1}}{\partial y^{n-1}}h(x+i y)\bigg|_{y=0}  $$
are all nontrivial polynomials in $x$, due to the presence of the terms, $x^n$, $x^{n-1}y$ and $x y^{n-1}$ in $h(x+iy)$ respectively.  So there exists $x_0\in\RR$ such that, when $x=x_0$, none of the above three polynomials vanishes. Defining
 $$\widetilde h(z)= h(z+ x_0),\quad x_0\in\RR,$$
 $\widetilde h$ is a real polynomial (of the same holomorphic and antiholomorphic degrees) and has the same number of zeros as $h$.  Moreover, the coefficients at the extreme points are all nonvanishing because
\begin{align*}
a_{0,0}&= \widetilde h(0) = h(x_0),
\\
b_{0,1}&=\frac{\partial}{\partial y}\widetilde h(i y)\bigg|_{y=0}=\frac{\partial}{\partial y}h(x_0+i y)\bigg|_{y=0},
\\
\heartsuit_{0,n-1}&=\frac{1}{(n-1)!}\frac{\partial^{n-1}}{\partial y^{n-1}}\widetilde h(i y)\bigg|_{y=0}=\frac{1}{(n-1)!}\frac{\partial^{n-1}}{\partial y^{n-1}}h(x_0+i y)\bigg|_{y=0},
\end{align*}
where, in the last line, the symbol $\heartsuit$ stands for $a$, when $n$ is odd, and for $b$, when $n$ is even.
\end{proof}

Now we prove Theorem \ref{thm:1}.
We divide the proof into two cases.

\noindent \textbf{Case 1.} Let $n$ be even. From Lemma \ref{lem:newton-poly}, $\mathcal{A}$ is an isosceles triangle with vertex set $\{(0,0), (0,n),(n,0)\}$ and $\mathcal{B}$ is the trapezoid with vertex set $\{(0,1),(0,n-1),(1,n-1),(n-1,1)\}$. Now, the Minkowski sum $\mathcal{M}$ of $\mathcal{A}$ and $\mathcal{B}$ is a trapezoid with vertex set $\{(0,1), (0,2n-1), (1,2n-1),(2n-1,1)\}$. The mixed area is, then,
$$[\mathcal{M}] - [\mathcal{A}] - [\mathcal{B}] = (2n^2 - 2n)-\frac{n^2}{2} - \frac{n^2-2n}{2}= n^2 - n.$$

\noindent \textbf{Case 2.} Let $n$ be odd. From the lemma, $\mathcal{A}$ is the trapezoid with vertex set $\{(0,0),(0,n-1),(1,n-1),(n,0)\}$ and $\mathcal{B}$ is the isosceles triangle with vertex set $\{(0,1),(0,n),(n-1,1)\}$. The Minkowski sum $\mathcal{M}$ of $\mathcal{A}$ and $\mathcal{B}$ is then the trapezoid with vertex set $\{(0,1),(0,2n-1),(1,2n-1),(2n-1,1)\}$. The mixed area is
$$[\mathcal{M}] - [\mathcal{A}] - [\mathcal{B}] = (2n^2 - 2n) - \frac{n^2-1}{2} - \frac{(n-1)^2}{2}=n^2-n. $$

\subsection{Proof of Theorem \ref{thm:2}}\label{sec:proof1}

For a harmonic function $h(z) = p(z) + \ol{q(z)}$, we say that $h$ is \emph{sense-preserving} at a point $z$ if the Jacobian of $h$,
 $$\det\left[\begin{array}{cc} \partial_{x} {\rm Re} \,h(x+iy ) &  \partial_{y} {\rm Re} \,h(x+iy ) \\ \partial_{x} {\rm Im} \,h(x+iy )  & \partial_{y} {\rm Im} \,h(x+iy ) \end{array} \right]_{x+iy=z} =  \lvert p'(z) \rvert ^2 - \lvert q'(z) \rvert ^2,$$
is positive, and \emph{sense-reversing} at $z$ if $\ol{h(z)}$ is sense-preserving at $z$. Otherwise, we say $h$ is \emph{singular} at $z$.  We also say that $h$ is \emph{regular} if all the zeros of $h$ are either sense-reversing, or sense-preserving.

 For an oriented, closed curve $\Gamma$ such that a continuous function $F$ does not vanish on $\Gamma$, we denote by $\Delta_{\Gamma}\arg F(z)$ the increment in the argument of $F$ along $\Gamma$.    The following is well known.

\begin{thm-others}[The argument principle for harmonic functions \cite{DHL,SS02}] Let $H$ be a harmonic function in a Jordan domain $D$ with boundary $\Gamma$.  Suppose $H$ is continuous in $\overline D$ and $H\neq 0$ on $\Gamma$.  Suppose $H$ has no singular zeros in $D$, and let $N=N_+ - N_-$, where $N_+$ and $N_-$ are the number of sense-preserving zeros and sense-reversing zeros of $H$ in $D$ respectively. Then, $\Delta_{\Gamma}\arg H(z)=2\pi N$.
\end{thm-others}

The next fact \cite{K-S} follows then by applying the argument principle  to a circle of a sufficiently large radius where $|p_n|\gg|q_m|$.

\begin{fact}\label{fact:n}
Let $h=p_n+\overline{q_m}$ be regular. Let $N_+$ be the number of sense-preserving zeros of $h$ and $N_-$ be the number of sense-reversing zeros. Then,
\begin{equation*}
	n=N_+ - N_-.
\end{equation*}
\end{fact}

An elegant proof (due to Donald Sarason) of the following lemma can be found in \cite{K-S}.
\begin{lemma}\label{lem:dense} If $p(z)$ is a polynomial of degree greater than 1, then the set of complex numbers $c$ for which $p(z) +\overline{q(z)} - c$ is regular is open and dense in $\CC$. \end{lemma}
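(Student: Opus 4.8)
The plan is to recognize that the regularity of $p(z)+\ol{q(z)}-c$ is precisely the statement that $c$ is a \emph{regular value} of the smooth map $h\colon\RR^2\to\RR^2$, $h(x+iy)=p(x+iy)+\ol{q(x+iy)}$, and then to invoke Sard's theorem. First I would identify the singular locus. By the Jacobian computation recalled above, a point $z$ is a critical point of $h$ (viewed as a map of $\RR^2$) exactly when $J(z):=\lvert p'(z)\rvert^2-\lvert q'(z)\rvert^2=0$; call this set $S$. A zero $z_0$ of $h-c$ is singular in the paper's sense precisely when $J(z_0)=0$, i.e. $z_0\in S$. Hence $p+\ol q-c$ fails to be regular if and only if $c\in h(S)$; equivalently, the admissible set of $c$ is exactly the set of regular values of $h$ (values not attained by $h$ being vacuously admissible).

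Density then follows at once from Sard's theorem: since $h$ is a polynomial map it is $C^\infty$, so its set of critical values $h(S)$ has two-dimensional Lebesgue measure zero. A set of measure zero has empty interior, and therefore its complement, the admissible set, is dense in $\CC$.

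For openness I would show that $h(S)$ is closed, which reduces to showing $S$ is compact. This is where the hypothesis $\deg p>1$ enters, together with the standing assumption $n>m$ (so that $\deg p>\deg q$): it forces $\deg p'>\deg q'$, whence $J(z)\to+\infty$ as $\lvert z\rvert\to\infty$. Consequently $S=\{J=0\}$ is bounded, and being the zero set of the continuous function $J$ it is closed, hence compact. Therefore $h(S)$ is compact, in particular closed, and the admissible set $\CC\setminus h(S)$ is open.

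The main obstacle is openness rather than density: Sard's theorem delivers density almost for free, but for a general smooth (non-proper) map the regular values need not form an open set, so one must genuinely exploit compactness of the critical locus. This is exactly where $\deg p>\deg q$ is indispensable; it also excludes the degenerate case $J\equiv0$, which would force $p'=\lambda q'$ with $\lvert\lambda\rvert=1$ and hence $\deg p'=\deg q'$, impossible once the degrees differ. Thus $S$ is a genuine bounded lemniscate rather than all of $\CC$, and the argument closes.
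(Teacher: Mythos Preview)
Your argument is correct. The paper does not actually prove this lemma; it cites Sarason's proof in \cite{K-S}. Structurally your approach and Sarason's coincide: both identify the set of non-regular values as $h(S)$ with $S=\{\,|p'|=|q'|\,\}$, obtain openness from compactness of $S$, and obtain density by showing $h(S)$ has empty interior. The only difference is in this last step. Sarason bypasses Sard's theorem with the more elementary observation that $S$, being the zero set of a nontrivial real polynomial, is a one-dimensional real-analytic variety; its image under the real-analytic map $h$ is therefore a finite union of analytic arcs and points, hence nowhere dense. Your route through Sard is heavier machinery but of course equally valid.

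One small remark on where the hypothesis $\deg p>1$ enters: under the standing assumption $n>m$ one already has $\deg p'>\deg q'$, and hence $S$ bounded, regardless of whether $n>1$. The condition $\deg p>1$ is inherited from the original setting in \cite{K-S}, where $q(z)=z$; there $n>m=1$ and $\deg p>1$ are the same statement. In the present generality it is really $n>m$ that carries the argument.
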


For $m=1$, Bleher et al. \cite{Ble} proved that in the space $\CC^{n+1}$  of harmonic polynomials $p_n(z)+\overline{z}$ of degree $n\geq 2$ the set of ``simple polynomials'', i.e., regular polynomials with $k$ zeros, is a non-empty open subset of $\CC^{n+1}$ if and only if $k = n,n+2,\dots, 3n-4,3n-2$, and that the non-simple polynomials are contained in a real algebraic subset of $\CC^{n+1}$.

The following fact is noted in \cite{LSL}, Theorem 3.2.

\begin{lemma}\label{lem:regular} If the function $h(z) = p_n(z) + \ol{z}$ has $3n-2$ zeros, then $h(z)$ is regular. \end{lemma}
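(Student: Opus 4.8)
The plan is to argue by contradiction, exploiting that the $3n-2$ bound of \cite{K-S} is not a statement about $h$ alone but about every harmonic polynomial of the form $p_n(z)+\overline z$, and hence is stable under perturbations of the analytic part. Suppose that $h=p_n+\overline z$ has $3n-2$ zeros but is \emph{not} regular, so that among its zeros there is at least one singular zero, i.e.\ a zero $z_0$ with $|p_n'(z_0)|^2-|q'(z_0)|^2=|p_n'(z_0)|^2-1=0$; equivalently $z_0$ lies on the lemniscate $\partial\Omega$ of \eqref{eq:Omega}. Since the zero set $h^{-1}(0)$ is finite, there are finitely many singular zeros $z_1,\dots,z_s$ with $s\ge 1$, and the remaining $M:=3n-2-s$ zeros are non-singular (each sense-preserving or sense-reversing, hence persisting as exactly one zero under small perturbations).

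First I would record the local picture at a singular zero. Using $q'\equiv 1$ and $h(z_j)=0$, the expansion $h(z_j+w)=p_n'(z_j)\,w+\overline w+\tfrac12 p_n''(z_j)w^2+O(w^3)$ has a rank-one linear part, since its Jacobian is $|p_n'(z_j)|^2-1=0$; thus $z_j$ is a critical point of the map $h$ and $0$ is a critical value. In the generic case $p_n''(z_j)\neq 0$ this is a fold: the critical value $0$ has exactly one local preimage (namely $z_j$), while on one side of the caustic nearby regular values have \emph{two} local preimages and on the other side none. The structural input I need is therefore that each singular zero admits a ``two-preimage side'': for a positive-measure set of small $\varepsilon\in\CC$ one has $h-\varepsilon$ possessing at least two zeros in a prescribed small neighborhood of $z_j$. (Non-fold, higher-order singularities are handled by the same principle, a critical point failing to be locally injective, so some nearby value has at least two preimages close to $z_j$; alternatively one first perturbs so that every singular zero is a fold.)

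The crucial step is to activate all $s$ of these two-preimage sides \emph{simultaneously}. A single constant shift $h-c$ as in Fact \ref{fact:n} cannot do this, because all the singular zeros share the one critical value $0$: moving $c$ away from $0$ pushes every fold off its critical value at once, so the folds cannot be steered independently. The remedy is to perturb the coefficients of $p_n$ instead. I would pick a polynomial $\delta$ (of degree at most $n$) prescribing, at each $z_j$, a small value $\delta(z_j)$ pointing into the two-preimage direction at $z_j$; this \emph{decouples} the critical values, replacing the common value $0$ by the distinct values $\approx\delta(z_j)$. Setting $\widetilde p_n=p_n+\delta$ and $\widetilde h=\widetilde p_n+\overline z$, for $\delta$ small enough $\widetilde p_n$ still has degree $n$, each of the $M$ non-singular zeros of $h$ persists as one zero of $\widetilde h$, and each $z_j$ splits into at least two zeros of $\widetilde h$ because $0$ now lies on the two-preimage side of the perturbed fold. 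Shrinking $\delta$ further and invoking the density in Lemma \ref{lem:dense} I may also take $\widetilde h$ regular. Then $\widetilde h$ has at least $M+2s=(3n-2)+s>3n-2$ zeros, contradicting the $3n-2$ bound of \cite{K-S} applied to $\widetilde h$. Hence $h$ has no singular zero and is regular.

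The main obstacle is exactly this simultaneous, decoupled splitting. One must verify (i) that each singular zero genuinely contributes two nearby zeros once its critical value is pushed to the correct side, including the non-fold degenerate cases, and (ii) that the independent interpolation data $\{\delta(z_j)\}_{j=1}^s$ can be met by a single small $\delta$ of degree $\le n$ while keeping $\widetilde h$ regular; the latter requires that the number $s$ of singular zeros not exceed the available degrees of freedom, so that evaluation at $z_1,\dots,z_s$ is surjective onto $\CC^s$. In other words, the heart of the matter is a transversality statement, that the finitely many local fold directions can be fired independently, which the full freedom of perturbing the analytic part $p_n$ supplies but the one-parameter constant shift does not.
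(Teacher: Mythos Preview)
Your overall strategy---perturb, split each singular zero into two, and overshoot the $3n-2$ bound---is plausible but remains a plan rather than a proof: you yourself flag that the bound on $s$ and the simultaneous two-preimage splitting (including the non-fold case) are not established. In particular, to guarantee $s\le n+1$ so that interpolation by a degree-$n$ polynomial is available, you would need some a priori control on the number of singular zeros, and nothing in your argument supplies it.

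The paper's proof is different and avoids all of this. It uses precisely the ``single constant shift $h-c$'' that you dismissed, by changing the quantity being counted. The key external input is Proposition~1 of \cite{K-S}: every zero of $p_n(z)+\overline z$ that is not sense-preserving (sense-reversing \emph{or} singular) attracts a critical point of $p_n$ under $z\mapsto \overline{p_n(z)}$, so there are at most $n-1$ such zeros. Hence among the $3n-2$ zeros at least $2n-1$ are sense-preserving. Now pick \emph{one} singular zero $z_0$; any small disk around it meets the sense-preserving region, so choose a sense-preserving point $\zeta$ there with $c=h(\zeta)$ small, and (by Lemma~\ref{lem:dense}) such that $h-c$ is regular. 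Then $h-c$ has a sense-preserving zero at $\zeta$ \emph{and} retains a zero near each of the $\ge 2n-1$ original sense-preserving zeros, giving $\ge 2n$ sense-preserving zeros. This contradicts the sharper half of the Khavinson--Swi\c{a}tek result, namely $N_+\le 2n-1$ for regular $p_n+\overline z$.

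Two points worth absorbing. First, the contradiction is obtained against $N_+\le 2n-1$, not against the total bound $3n-2$; once you count only sense-preserving zeros, a single extra zero suffices and no decoupling of folds is needed. Second, the dynamical fact ``non-sense-preserving zeros $\le n-1$'' is exactly what your approach would need anyway to bound $s$; the paper simply uses it up front and thereby reduces the perturbation to a constant. So your route is not wrong in spirit, but it is strictly more laborious and its missing ingredient is the same complex-dynamics lemma that makes the paper's shorter argument work.
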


\begin{proof}
Assume $h$ is not regular and has exactly $3n-2$ roots.  The number of roots that are not sense-preserving is at most $n-1$ because each of those roots attracts a critical point of $p_n$ under the iteration of $z\mapsto p_n(z)$, cf. Proposition 1 in \cite{K-S}.  Therefore, the number of sense-preserving roots must be at least $2n-1$. For all non-singular roots $z_j$'s, there exists $\epsilon>0$ such that the disks, $B_{\epsilon}(z_j) = \{ z : \lvert z - z_j \rvert < \epsilon \}$, do not intersect each other, do not intersect the singular set $\{z : |p'(z)|=1\}$, and $\Delta_{\partial B_\epsilon(z_j)}\arg h =\pm 2\pi$. Defining
 $$\delta= \min_{z\in \partial B_\epsilon(z_j)} |h(z)|, $$
 for any $c\in{\mathbb C}$ with $\lvert c \rvert  < \delta$, we have $\Delta_{\partial B_\epsilon(z_j)}\arg h =\Delta_{\partial B_\epsilon(z_j)}\arg (h - c) $ and, using the argument principle for harmonic functions, the equation $h(z) - c$ has exactly one zero in each $B_{\epsilon}(z_j)$.

Suppose $z_0$ is a singular zero of $h$. Since $h(z_0) = 0$ and $h$ is continuous near $z_0$, there is an $\eta>0$ such that $B_{\eta}(z_0)$ does not intersect any $B_\epsilon(z_j)$ and
 $$\text{$\lvert h(z) \rvert < \delta$ for all $z \in B_{\eta}(z_0)$.}$$
Further, the set $B_{\eta}(z_0)$ intersects sense-preserving region,  since otherwise, $\log|p'(z)|$ would be constant over $B_\eta(z_0)$ by the maximum modulus principle.
Let $\zeta$ be a sense-preserving point in $B_{\eta}(z_0)$. We can set $c=h(\zeta)$ since $\lvert h(\zeta) \rvert < \delta$, and $h(z)-c=h(z)-h(\zeta)$ must have zeros in each  $B_{\epsilon}(z_j)$ and at $\zeta\in B_\eta(z_0)$.  Consequently, $h(z)-h(\zeta)$ has at least $2n$ sense-preserving zeros. By Lemma \ref{lem:dense}, we can choose $\zeta$ such that $h(z)-h(\zeta)$ is a regular polynomial, which contradicts the result of Khavinson and Swi\c{a}tek \cite{K-S} that the regular polynomial can have at most $2n-1$ sense-preserving roots.
\end{proof}

We now complete the proof of Theorem \ref{thm:2}.

\begin{proof}

Let $p_n(z)$ be an analytic polynomial such that the equation $p_n(z) + \ol{z}=0$ has $3n-2$ solutions. By Lemma \ref{lem:regular} the polynomial $h(z)=p_n(z)+\ol{z}$ is regular. Let $z_0$ be a zero of $h$. One can find a circle,  $\Gamma$, centered at $z_0$ with radius $\epsilon$ such that $h$ does not vanish on $\Gamma$ and $\Delta_{\Gamma}\arg h=\pm 2\pi$.  As in a standard proof of Rouch\'e's theorem, taking $\delta$ such that
$$0<\delta<  \frac{\min_{z\in\Gamma}|h(z)|}{\max_{z\in\Gamma}(|z|^m+1)},$$ we have the perturbed mapping, $z \mapsto h(z) + \delta \ol{z}^m$, that preserves the winding number of $h(\Gamma)$, that is, the perturbed mapping also vanishes in the interior of $\Gamma$. Applying the same argument to all $3n-2$ zeros of $h$, we can choose $\delta$ such that $h(z)+\delta\overline z^m$ has (at least) $3n-2$ zeros. Setting now $q_m(z)=\delta z^m+z$ completes the proof.
\end{proof}

\begin{remark}
This proof suggests that if the equation $p_n(z) + \ol{q_{m-1}(z)}=0$ has at most $k$ solutions, then there exists a harmonic polynomial $p_n(z) + \ol{q_m(z)}$ with $k$ zeros. However, a proof of this, following the above argument, would require that $p_n(z) + \ol{q_{m-1}(z)}$ be regular.
\end{remark}

\subsection{Proof of Theorem \ref{thm:3}}

Let us sketch the procedure (similar to \cite{LLL,HLLM}) that allows creating examples of harmonic polynomials with a large number of roots (cf. Figure \ref{fig:4252}).

Fix $n$ and $m<n$.
Let
\begin{equation*}
\begin{split}
	S(z)&=(z-a)^{n-1}(z+(n-1)a),
	\\  T(z)&=(z-b)^{m+1}(z^{n-m-1}+t_{n-m-2}z^{n-m-2}+\cdots+t_0).
	\end{split}
\end{equation*}
The $n-m-1$ complex parameters $t_j$'s in $T(z)$ are uniquely determined by the condition that
$S(z) -T(z)$ is a polynomial of degree $m$.  Then we choose $a\in{\mathbb C}$ and $b\in{\mathbb C}$ by hand to maximize the number of intersections between the two sets:
\begin{equation*}
	\Gamma_T=\{z\,|\, {\rm Im}\,T(z)=0\},\quad 	\Gamma_S=\{z\,|\, {\rm Re}\,S(z)=0\}.
\end{equation*}
These intersections are the roots of the equation $p_n(z)+\overline{q_m(z)}=0$ where
\begin{equation*}
	p_n(z)=S(z) + T(z),\quad q_m(z)=S(z)-T(z),
\end{equation*}
since
\begin{equation*}
	p_n(z)+\overline{q_m(z)}=2\,{\rm Re}\,S(z)+2\,i\, {\rm Im}\,T(z) .
\end{equation*}

\begin{figure}[h]
\mbox{\includegraphics[width=0.45\textwidth]{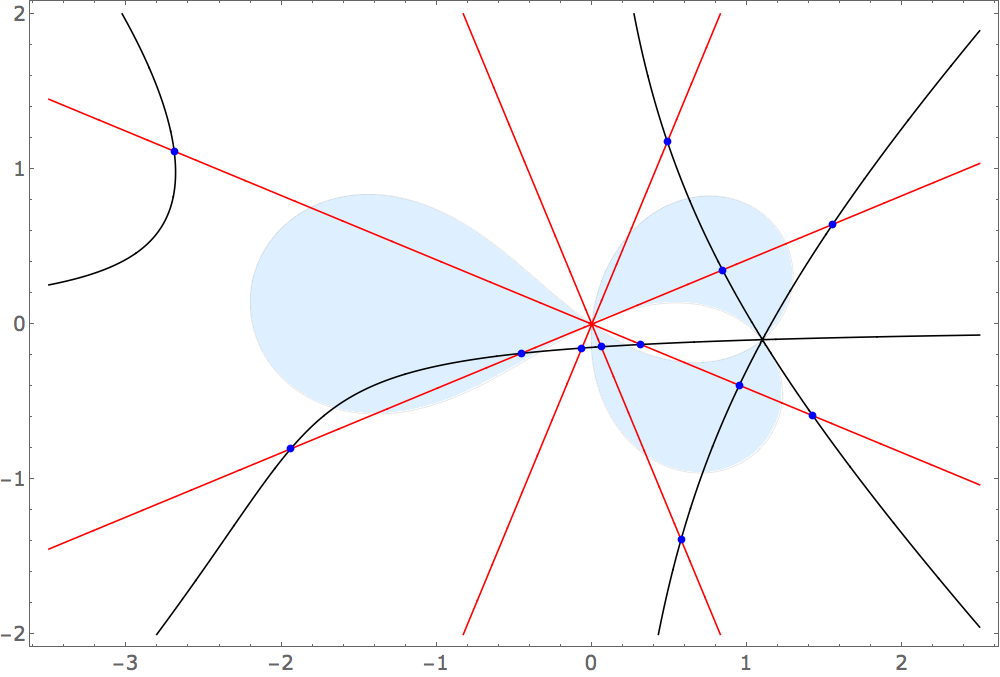}}
\hspace{5px}
\mbox{\includegraphics[width=0.45\textwidth]{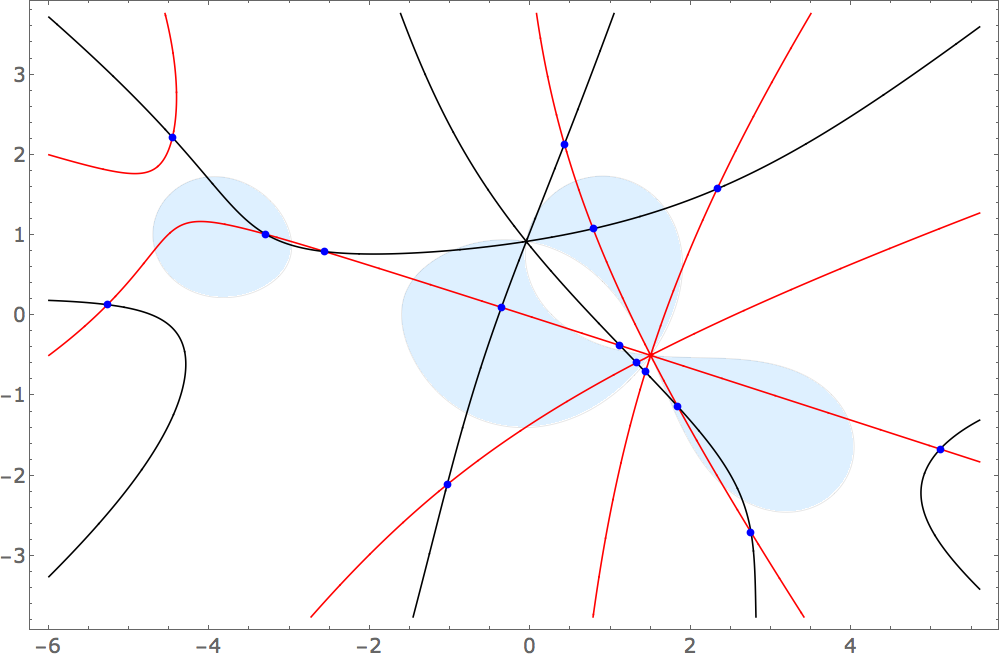}}
\caption{\label{fig:4252} Curves $\Gamma_T$ (black) and $\Gamma_S$ (red) for for $n=4, 5$ and $m=2$.  The shaded region is $\Omega$. For $n=4$ (left) we choose $a=0$ and $b=1.1-0.1i$ to produce 12 roots.   For $n=5$ (right) we choose $a=1.5-0.5i$ and $b=-0.05+0.92i$ to produce 15 roots.  }
\end{figure}

Theorem \ref{thm:3} can be stated equivalently as follows:

\begin{thm-others}[Theorem \ref{thm:3}] For $a=0$ and for a generic choice of $b\in{\mathbb C}$, the equation $p_n+\overline{q_m}=0$ defined in terms of $S$ and $T$ (as above) has at least
 $m^2+m+n$ roots.
\end{thm-others}

\begin{proof}
Choose $a=0$ and let $\Gamma_S$ be the set of rays emanating from the origin and extending to the infinity, i.e., $\{\infty\times e^{(\frac{1}{2}+k)\pi i/n}:k=0,1,\cdots,2n-1\}$.  Note that $\Gamma_T$ has $2m+2$ curved rays emanating from $b$ where every ray eventually approaches the infinity in the directions of $\{\infty\times e^{l\pi i/n}\}_{l}$ such that: i) different rays correspond to different values of $l$, and ii) $l$ is chosen in a subset, that we will denote by $N_{2m+2}$, containing $2m+2$ numbers from $\{0,1,\cdots,2n-1\}$.  Assuming that $b\notin\Gamma_S$ and that none of the rays hit any critical point of $T(z)$ except at $z=b$,  those curved rays do not intersect each other.

Let $W_l$'s ($l=0,1,\cdots,2n-1$) denote the connected components (that we will call ``sectors'') in $\CC\setminus\Gamma_S$ that contain the asymptotic direction $\infty\times e^{l\pi i/n}$.   A simple geometric consideration tells us that the number of intersections between $\Gamma_S$ and a curved ray starting from $b\in W_{l_1}$ and continuing into $W_{l_2}$ without passing the origin is at least
 $$\min(|l_2-l_1|,2n-|l_2-l_1|).$$
This means that the minimal possible number of intersections between $\Gamma_S$ and ``the $2m+2$ curved rays in $\Gamma_T$'' is
$$\sum_{l_2\in N_{2m+2} }\min(|l_2-l_1|,2n-|l_2-l_1|)\geq 0+2(1+2+\cdots+ m ) + (m+1) = (m+1)^2.$$
The remaining part of $\Gamma_T$ (i.e. that is not connected to $b$) approaches the infinity in $2n-2m-2$ different sectors among $W_l$, that are not already taken by the rays from $b$.   Assuming that there are no critical points of $T$ in $\Gamma_T$ except the one at $b$, each curve in the ``remaining part of $\Gamma_T$'' must connect two sectors from the $2n-2m-2$ sectors such that, around $\infty$, each sector is "hit" by one curve only.  Since there is at least one intersection between each curve and $\Gamma_S$, the minimal number of intersections between the ``remaining part of $\Gamma_T$'' and $\Gamma_S$ is $n-m-1$ and the minimal number of intersections between $\Gamma_T$ and $\Gamma_S$ is given by $(m+1)^2+n-m-1=m^2+m+n$.
\end{proof}

\subsection{A remark on Wilmshurst's conjecture}\label{sec:further}

\begin{figure}[h]
\begin{center}\includegraphics[width=0.7\textwidth]{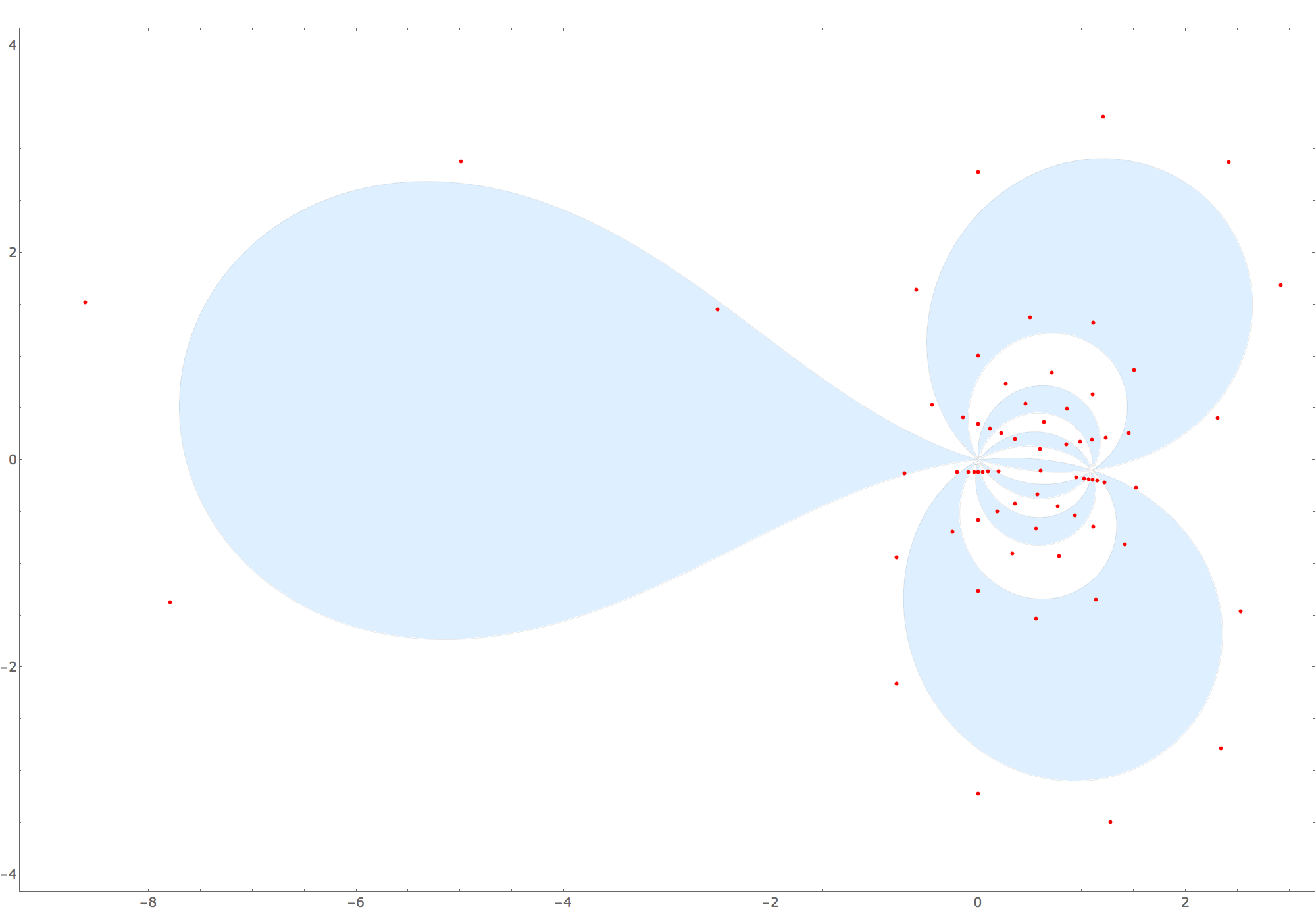}
\caption{\label{fig:97} Roots and $\Omega$ (shaded region) for $(n,m)=(9,7)$, $a=0$ and $b=1.1-0.1 i$.}
\end{center}
\end{figure}

Comparing with Wilmshurst's conjecture, Theorem \ref{thm:3} undercounts the number of roots by
\begin{equation}\label{eq:diff}
	3n-2+m(m-1) - (m^2+m+n) = 2(n-m-1).
\end{equation}
One can show that the corresponding lemniscate has $2m+2$ curves that connects $b$ and $a$ (cf. Figure \ref{fig:97}).
The numerics suggests that the $2m+1$ regions in between these curves have, respectively,
$$ m-1,m-2,\cdots,2,1,1,2,\cdots,m-1,m,$$
(counting from top to bottom in Figure \ref{fig:97}) roots.
Among these exactly $1+2+\cdots+m=m(m+1)/2$ of them are found in the sense-reversing region (with $m$ components; the shaded part in Figure \ref{fig:97}) and, therefore, the total number of roots must be at least
\begin{equation}\label{eq:mmn} 2\times \frac{m^2+m}{2}+ n =  m^2+ m + n  \end{equation}
by Fact \ref{fact:n}, giving the same number as in Theorem \ref{thm:3}.

Since there can be at most $n-1$ components in $\Omega$, there can be $n-m-1$ extra components in the sense-reversing region, and these components are not connected to the point $b$.  Wilmshurst's count is obtained {\em when each of these components has exactly one root}.  This increases the total number of {\em sense-reversing roots} by $n-m-1$ and, hence, increases the total number of roots by $2(n-m-1)$, cf. \eqref{eq:mmn}, which is precisely Wilmshurst's count, cf. \eqref{eq:diff}.

The various counterexamples studied in \cite{LLL,HLLM} indicate that, the $n-m-1$ ``extra components'' of $\Omega$ (that are not connected to $b$) can have more than one root in each component.   For example, Figure \ref{fig:97} shows that there are two roots inside the component of $\Omega$ that is not connected to $b$.

For $m=n-2$, our numerical experiment supports the structure shown in Figure \ref{fig:97}:  there are $m^2+m+n$ roots that are counted in terms of the $2m+2$ curves connecting $a$ and $b$, and the excessive zeros are twice the number of zeros found in the component of $\Omega$ that is not connected to $b$.
 Choosing $a=0$ and $b=e^{i\pi/(2n)}+\epsilon$ ($\epsilon\neq 0$ is needed so that the origin is not a root), we found that the number of ``excessive zeros'', i.e. (the total number of zeros)$-(m^2+m+n)$, increases by 4 whenever $n$ hits the numbers:
$$ 7, 15, 22, 30, 37, 45, 52, 60, 68, 75, 83, 90, 98, 105, 113, 120, 128, 136,\cdots.
$$
For example, for $n=100$, there are total 13 numbers before 100 from the list, and the total number of zeros is $4\times 13 +(m^2+m+n) = 52+9998$, exceeding Wilmshurst's count by $4\times 13 -2(n-m-1) = 52 - 2 = 50 $.
These experiments prompt us to suggest the following conjecture.

\bigskip
\noindent{\bf Conjecture.  } {\em When $m=n-2$, the maximal number of roots of $h=p_n+\overline{q_m}$ is given by
\begin{equation*}
	n^2 - \frac{3}{2} n + o(n)
\end{equation*}
as $n$ grows to $\infty$ (which is larger than Wilmshurst's count of $n^2-2n+4$).
}

\section{Geometry of caustics: Proof of Theorem \ref{thm-3}}

As before, let $\Omega$ be defined by \eqref{eq:Omega}.

\begin{lemma}\label{lem:D-univalent} Setting
 $$f(z)=p'_n(z)/q'_m(z),$$
let $D$ be a connected component of $\Omega$ with exactly $k$ (counting the multiplicities) zeros of $f$ in $D$.
Then, $f:D\to {\mathbb D}$, where ${\mathbb D}$ is the unit disk, is a branched covering of degree $k$.
 \end{lemma}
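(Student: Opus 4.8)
The plan is to show that the restriction $f|_D\colon D\to\mathbb{D}$ is a \emph{proper} holomorphic map and then invoke the fact that a proper holomorphic map onto the disk is a finite branched covering whose degree equals the number of preimages, counted with multiplicity, of any single value. The degree is then identified by counting the preimages of $0$.

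First I would record two preliminary facts. Since $|f|\le 1$ on $\overline D$, the rational function $f$ has no poles there, so $f$ is holomorphic on a neighborhood of $\overline D$; and since $\deg f=(n-1)-(m-1)=n-m\ge 1$, we have $|f(z)|\to\infty$ as $z\to\infty$, so $\Omega$, and in particular $D$, is bounded and $\overline D$ is compact. Because $D$ is a connected component of the open set $\Omega$, its boundary lies in the lemniscate, whence $|f|\equiv 1$ on $\partial D$.

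Next I would establish properness. Let $K\subset\mathbb{D}$ be compact, say $K\subset\{|w|\le r\}$ with $r<1$. If a sequence $z_j\in f^{-1}(K)\cap D$ converged to a boundary point $z^{\ast}\in\partial D$, then $|f(z^{\ast})|=\lim_j|f(z_j)|\le r<1$, contradicting $|f|\equiv 1$ on $\partial D$; hence the closure of $f^{-1}(K)\cap D$ in $\overline D$ stays inside $D$, so $f^{-1}(K)\cap D$ is compact and $f|_D$ is proper. Moreover $f|_D$ is non-constant, since $|f|<1$ throughout $D$ while $|f|=1$ on $\partial D$. A non-constant proper holomorphic map has image that is open (open mapping theorem) and closed (properness) in the connected set $\mathbb{D}$, hence equal to all of $\mathbb{D}$, so $f|_D\colon D\to\mathbb{D}$ is a finite branched covering of some degree $d$.

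Finally I would compute $d$. For a branched covering the quantity $\#f^{-1}(w)$, counted with multiplicity, is independent of $w$ and equals $d$; taking $w=0$ identifies $d$ with the number of zeros of $f$ in $D$, namely $k$. Equivalently, since $|f|=1>|w|$ on $\partial D$ for every $w\in\mathbb{D}$, Rouch\'e's theorem shows that $f$ and $f-w$ have the same number of zeros in $D$, again yielding $k$ for all $w$. I expect the only genuine subtlety to be boundary regularity: $\partial D$ is a component of the lemniscate $\{|f|=1\}$ and may carry finitely many singular points, precisely the zeros of $f'$ lying on the level set, so the contour-integral form of the argument principle would require decomposing the curve into Jordan arcs. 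This is exactly why I would route the core of the argument through properness, which refers only to the behavior of $|f|$ near $\partial D$ and is insensitive to such singularities.
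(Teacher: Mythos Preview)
Your proof is correct. Both your argument and the paper's reach the same conclusion, but by different mechanisms.

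The paper argues directly via the argument principle: since $f(\partial D)\subset\partial\mathbb{D}$, the winding number $\tfrac{1}{2\pi}\Delta_{\partial D}\arg(f(z)-w)$ is the same for every $w\in\mathbb{D}$ (the image curve lies on the unit circle and hence ``cannot backtrack'' relative to interior points), and at $w=0$ this number is $k$. This is essentially the Rouch\'e-type observation you mention at the end, used as the entire proof rather than as a secondary remark.

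Your primary route through properness is a genuinely different organization. It is slightly more abstract but buys robustness: it makes no reference to parametrizing or integrating over $\partial D$, so the possible singular points of the lemniscate (where $f'=0$) and the possibility that $\partial D$ consists of several Jordan curves (when $q_m'$ has zeros enclosed by $D$) are handled automatically. The paper's argument is shorter and more concrete but tacitly relies on the argument principle being insensitive to those boundary issues, which is true but left implicit. Either approach is entirely adequate here.
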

\begin{proof}
By definition, $D$ is a connected component of $f^{-1}({\mathbb D})$.   By the argument principle, for any $w\in {\mathbb D}$, the number of preimages of $w$ under $f$ inside $D$ is given by the winding number, $\frac{1}{2\pi}\Delta_{\partial D} \arg (f(z)-w)$.  Since $f(\partial D)\subset\p\mathbb D$ and cannot ``backtrack" on $\partial D$, the winding number does not depend on $w\in{\mathbb D}$ and it is $k$ at $w=0$ because $D$ contains exactly $k$ zeros of $f$.
\end{proof}

When $D$ contains exactly one critical point of $p_n$, $f:D\to{\mathbb D}$ is a univalent map.  In this case,
let $\eta:[0,2\pi)\to \partial D$ be the parametrization of $\partial D$ given by
\begin{equation*} \eta(\theta) = f^{-1}(e^{i\theta}),
\end{equation*}
where $f^{-1}$ on $\partial{\mathbb D}$ is obtained by the continuous extension of $f^{-1}:{\mathbb D}\to D$.
This parametrization of $\partial D$ is given by the harmonic measure of $D$ (normalized by the factor $2\pi$) with respect to the pole at the zero of $f$, i.e.,
\begin{equation}\label{harm-para} d\theta = d\arg f(z),\quad z\in\partial D.
\end{equation}
This viewpoint can be be generalized when there are $k$ zeros of $f$ in $D$.   In this case, the same equation \eqref{harm-para} defines the parametrization, $\eta:[0,2\pi k)\to \partial D$, by the harmonic measure of $k$-sheeted disk.

On a smooth part of the curve $\partial D$ (i.e., where $f'\neq 0$), let $v(z)$ be the tangent vector of the curve $\partial D$ at $z\in \partial D$ given by
\begin{equation}\label{eq:v}
	v(z)= \frac{d \eta(\theta)}{d\theta}=i\frac{f(z)}{f'(z)}.
\end{equation}
Let us consider the image of $\partial D$ under $h$.  For each $z\in\partial D$ with $f'(z)\neq 0$ we obtain a tangent vector of the curve $h(\partial D)$ at $h(z)$ as follows ($v$ is the tangent vector to $\partial D$, cf. \eqref{eq:v}, $|f|=1$ on $\partial D$ ):
\begin{equation}\label{eq:V}
	V(z) = \Big (v(z)\partial +\overline{v(z)}\, \overline \partial\Big) h(z) = v(z) p'_n(z) -\overline{v(z)} \,\overline{q'_m(z)},
\end{equation}
assuming this expression does not vanish. The next two lemmas deal with the geometry of the caustic that we identify now. 

\vspace{0.3cm}
\noindent{\bf Definition.} {\it The image of the critical lemniscate $h(\partial \Omega)$ is called the caustic.  }

\begin{lemma}\label{lem-caustic}
Let $D$ be a component of $\Omega$ with exactly $k$ zeros (counting multiplicity) of $f$ and $\partial D$ is parametrized by $\eta:[0,2\pi k)\to\partial D$ such that
$ d\arg f(\eta(\theta)) =d\theta$.
At $z=\eta(\theta)\in\partial D$ where $f'(z)\neq 0$ and ${\rm Im}\big(v(z) q'_m(z) \sqrt{f(z)}\big)\neq 0$, we have
\begin{equation*}
\frac{d \arg V(\eta(\theta))}{d\theta}=\frac{1}{2}.
\end{equation*}
\end{lemma}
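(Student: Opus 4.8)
The plan is to compute $V(\eta(\theta))$ explicitly on $\partial D$ and read off its argument directly. First I would invoke the two relations that hold on $\partial D$: since $f = p_n'/q_m'$ we have $p_n' = f\, q_m'$, and since $|f| = 1$ on $\partial D$ we have $\overline f = 1/f$. Using the harmonic-measure parametrization \eqref{harm-para} I may write $f(\eta(\theta)) = e^{i\theta}$ (the additive constant in $\arg f$ is irrelevant for $d\arg V/d\theta$), so that $\sqrt{f} = e^{i\theta/2}$ is single-valued on the smooth arc under consideration.

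The key structural observation is that the two terms of $V$ in \eqref{eq:V} are complex conjugates of each other up to the unimodular factor $f$. Writing $A := v(z)\, p_n'(z)$ for the first term, the boundary relations give $\overline{q_m'} = f\,\overline{p_n'}$, and hence the second term is $\overline{v}\,\overline{q_m'} = f\,\overline{v}\,\overline{p_n'} = f\,\overline A$. Therefore
$$V = A - f\,\overline A \qquad \text{on } \partial D.$$

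Then I would extract the argument by an elementary trigonometric factoring. Writing $A = |A|\,e^{i\alpha}$ and $f = e^{i\theta}$ yields
$$V = |A|\bigl(e^{i\alpha} - e^{i(\theta - \alpha)}\bigr) = 2i\,|A|\,\sin\!\left(\alpha - \tfrac{\theta}{2}\right) e^{i\theta/2}.$$
Consequently $\arg V = \tfrac{\pi}{2} + \tfrac{\theta}{2} + \arg\sin\!\left(\alpha - \tfrac{\theta}{2}\right)$, where the last summand is locally constant (equal to $0$ or $\pi$) as long as $\sin(\alpha - \theta/2) \neq 0$; differentiating in $\theta$ then gives $d\arg V/d\theta = \tfrac12$, as claimed. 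To match the hypothesis I note that $A = v\,p_n' = v\,q_m'\,f = \bigl(v\,q_m'\sqrt f\,\bigr)\sqrt f$, so $v\,q_m'\sqrt f = A\,e^{-i\theta/2}$ has imaginary part $|A|\,\sin(\alpha - \theta/2)$. Thus the condition $\operatorname{Im}\bigl(v\,q_m'\sqrt f\,\bigr) \neq 0$ is precisely the condition $\sin(\alpha - \theta/2) \neq 0$ that makes $V$ nonvanishing and $\arg V$ differentiable with derivative $\tfrac12$.

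The only genuinely delicate point — and where I would be careful — is the bookkeeping of the conjugates together with the substitution $\overline f = 1/f$, since it is exactly this that collapses the two terms of $V$ into the single expression $A - f\,\overline A$; once that identity is secured, the remainder is the routine factoring above. I would also confirm that the local choices of $\sqrt f$ and of the branch of $\arg\sin(\cdot)$ are consistent across the smooth arc, but because the statement concerns only the derivative $d\arg V/d\theta$ this introduces no real obstruction.
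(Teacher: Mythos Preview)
Your argument is correct and is essentially the same as the paper's: both exploit $|f|=1$ on $\partial D$ to show that the two summands of $V$ in \eqref{eq:V} are complex conjugates up to the unimodular factor $f$, then factor to obtain $V = 2i\sqrt{f}\cdot(\text{real})$, from which $\arg V \equiv \tfrac{\pi}{2}+\tfrac{1}{2}\arg f$ modulo $\pi$ and the derivative $\tfrac12$ follow. The only cosmetic difference is that the paper factors $\sqrt{f}\,|q_m'|$ out of $V$ directly to reach $V=2i\sqrt{f}\,\operatorname{Im}(v\,q_m'\sqrt{f})$, whereas you first write $V=A-f\overline{A}$ with $A=v\,p_n'$ and then do the trigonometric split; your identity $A\,e^{-i\theta/2}=v\,q_m'\sqrt{f}$ shows the two formulations coincide.
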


In other words, the caustic, away from the possible singularities, has constant curvature with respect to the special parametrization defined above.

\begin{proof}
Note that, for $z\in \partial D$, the two terms in the right hand side of \eqref{eq:V} have the same modulus (i.e., $|p'_n|=|q'_m|$ on $\partial D$).
We may rewrite $V$ as
\begin{equation}\label{eq:V-cusp}
\begin{split}
	V(z)&=\sqrt{\frac{p'_n(z)}{q'_m(z)}} |q'_m(z)| \left(v(z) \sqrt{\frac{p'_n(z)}{\overline{q'_m(z)}}} -\overline{v(z)} \,\sqrt{\frac{\overline{q'_m(z)}}{p'_n(z)}}\right)
	\\&= \sqrt{\frac{p'_n(z)}{q'_m(z)}} |q'_m(z)| 2i\, {\rm Im}\left(v(z) \sqrt{\frac{p'_n(z)}{\overline{q'_m(z)}}}\right)
	\\&= 2i\,\sqrt{f(z)}  \, {\rm Im}\left(v(z) q'_m(z) \sqrt{f(z)}\right).
\end{split}
\end{equation}
(Note that the result is independent of the branch of the square root function as $\sqrt{f(z)}$ appears twice.)
If ${\rm Im}\Big(v(z) q'_m(z) \sqrt{f(z)}\Big)\neq 0$, then we have, modulo $\pi$,
\begin{equation*}
	\arg V(z)
	=\frac{1}{2}\arg f(z) + \frac{\pi}{2}=\frac{-i}{2}\log f(z) +\frac{\pi}{2},
\end{equation*}
where, in the last equality, we used again that $|f(z)|=1$ at $z\in\partial D$.
We have:
\begin{equation}\label{eq:V-sqrt-f}
	\frac{d\arg V(\eta(\theta))}{d\theta} = \frac{-i}{2}\frac{f'}{f}\frac{d\eta(\theta)}{d\theta}=\frac{1}{2},
\end{equation}
where we used  \eqref{eq:v} in the last equality.
\end{proof}

The next lemma characterizes the ``possible singularities''.

\begin{lemma}\label{lem:caustic-cusp}
 The only singularities of the curve $h(\partial D)$ are cusps.   When $z_0\in\partial D$ is not in the branch cut of $\sqrt{f}$ there is a cusp at $h(z_0)$ if and only if the mapping from $\partial D$ to ${\mathbb R}$ given by
$$z\mapsto  {\rm Im}\Big(v(z) q'_m(z) \sqrt{f(z)}\Big) \text{~~ on }\partial D,$$ changes sign across $z_0$.  When $z_0$ is in the branch cut of $\sqrt{f}$, the cusp occurs at $h(z_0)$ if and only if the mapping,
$$z\mapsto  {\rm Re}\left(\frac{\sqrt{f(z)}}{\sqrt{f(z_0)}}\right) \,{\rm Im}\Big(v(z) q'_m(z) \sqrt{f(z)}\Big) \text{~~ on }\partial D,$$
changes sign across $z_0$.
\end{lemma}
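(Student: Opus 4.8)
The plan is to read off the type of every singularity directly from the factorization obtained in the proof of Lemma~\ref{lem-caustic}. Setting
\begin{equation*}
R(z):={\rm Im}\Big(v(z)\,q'_m(z)\,\sqrt{f(z)}\Big)\in\mathbb{R},
\end{equation*}
equation \eqref{eq:V-cusp} becomes $V(z)=2i\sqrt{f(z)}\,R(z)$. Since $|f|=1$ on $\partial D$ we have $|2i\sqrt{f(z)}|=2$, so $V$ vanishes \emph{exactly} at the zeros of the real-valued $R$; away from them the unit tangent to $h(\partial D)$ is $T=V/|V|=i\sqrt{f}\,{\rm sgn}(R)$. Because $\tfrac{d}{d\theta}h(\eta(\theta))=V$ (combine \eqref{eq:v} and \eqref{eq:V}), the singular points of $h(\partial D)$ are precisely the zeros of $R$, a cusp is a point where $T$ reverses, and everything hinges on how ${\rm sgn}(R)$ and the branch of $\sqrt f$ behave across such a zero $z_0$.

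First I would treat a zero $z_0=\eta(\theta_0)$ that is \emph{not} on the branch cut of $\sqrt{f}$. By the harmonic-measure normalization \eqref{harm-para} we may locally take $f(\eta(\theta))=e^{i\theta}$, so $\sqrt{f(\eta(\theta))}=e^{i\theta/2}$ is smooth near $\theta_0$. Writing $s=\theta-\theta_0$ and integrating $\tfrac{d}{d\theta}h=V$, the expansion
\begin{equation*}
V(\eta(\theta))=2i\,e^{i\theta_0/2}\Big(1+\tfrac{i}{2}s+\cdots\Big)\Big(R'(\theta_0)\,s+\tfrac12 R''(\theta_0)\,s^2+\cdots\Big)
\end{equation*}
gives, in the frame rotated by $(-i)e^{-i\theta_0/2}$,
\begin{equation*}
(-i)e^{-i\theta_0/2}\big(h(\eta(\theta))-h(z_0)\big)=R'(\theta_0)\,s^2+\tfrac13 R''(\theta_0)\,s^3+\tfrac{i}{3}R'(\theta_0)\,s^3+\cdots,
\end{equation*}
whose real and imaginary parts are $R'(\theta_0)\,s^2+O(s^3)$ and $\tfrac13 R'(\theta_0)\,s^3+O(s^4)$. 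When $R'(\theta_0)\neq0$---equivalently when $R$ changes sign across $z_0$---this is the $(s^2,s^3)$ normal form of an ordinary cusp; when $R$ does not change sign, ${\rm sgn}(R)$ is constant, $T$ does not reverse, and $h(z_0)$ is a regular point of the image. This simultaneously shows that the only genuine singularities are cusps and proves the non-branch-cut criterion.

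The delicate case is $z_0$ on the branch cut, where all the care lies in the branch accounting. The vector $V=(v\partial+\overline{v}\,\overline{\partial})h$ is an intrinsic tangent to $h(\partial D)$, hence \emph{continuous} across $z_0$; but $\sqrt{f}$ is determined only up to sign along $\partial D$, so any single-valued branch jumps by $-1$ across the cut, and the factor $R$ in $V=2i\sqrt f\,R$ flips sign there by convention rather than by geometry. The cusp condition $T(z_0^{+})=-T(z_0^{-})$ therefore reads differently: off the cut $\sqrt f$ is continuous and it reduces to a sign change of $R$, whereas across the cut $\sqrt{f(z_0^{+})}=-\sqrt{f(z_0^{-})}$ already supplies the reversal, so a cusp occurs \emph{iff} $R$ does \emph{not} change sign in the fixed branch. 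To state both cases at once I would multiply $R$ by ${\rm Re}\big(\sqrt{f(z)}/\sqrt{f(z_0)}\big)$: this factor is $\approx+1$ for $z$ near $z_0$ off the cut, while across the cut it is $\approx+1$ on one side and $\approx-1$ on the other. Hence it exactly cancels the spurious branch flip of $R$, and a sign change of ${\rm Re}\big(\sqrt{f}/\sqrt{f(z_0)}\big)\,R$ becomes equivalent to a genuine cusp in every case.

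The main obstacle is precisely this branch accounting: one must keep the geometric quantity $V$ (branch-free and continuous) rigorously separate from its algebraic factorization $2i\sqrt f\,R$ (each factor of which flips across the cut), and check that the correction factor ${\rm Re}\big(\sqrt{f}/\sqrt{f(z_0)}\big)$ changes sign exactly on the cut and nowhere else in a neighborhood of $z_0$. The local computation itself is routine once the $(s^2,s^3)$ normal form is secured, which holds under the generic assumption that the zeros of $R$ are simple.
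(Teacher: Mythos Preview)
Your argument is correct and follows the same route as the paper: both rest on the factorization $V=2i\sqrt{f}\,R$ from \eqref{eq:V-cusp} and read off the cusp criterion from the sign of $R$. The paper's proof is softer---it simply notes that since $2i\sqrt{f}$ has constant modulus and smoothly varying argument, a singularity of $h(\partial D)$ occurs exactly where $V$ reverses direction, i.e.\ where $R$ changes sign, without writing down the $(s^2,s^3)$ normal form; this has the advantage of covering higher-order zeros of $R$ uniformly, whereas your explicit computation is restricted (as you acknowledge) to simple zeros. One ingredient you omit and the paper supplies is why the singular set is discrete at all: if $\{R=0\}\cap\partial D$ were infinite it would contain an arc (being the level set of a real-analytic function), $V$ would vanish identically there, and $h$ would collapse that arc to a point, contradicting the finite valence $\le n^2$ of the harmonic polynomial $h$. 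Conversely, your branch-cut bookkeeping via the correction factor ${\rm Re}\big(\sqrt{f}/\sqrt{f(z_0)}\big)$ is made more explicit than in the paper, which essentially leaves that part to the reader.
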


\begin{proof}

First note that away from the branch cuts of $\sqrt f$, the set
$$\partial D\cap\{z:{\rm Im}\big(v(z) q'_m(z) \sqrt{f(z)}\big)=0\}$$
is  finite. Indeed, otherwise as a level set of a harmonic function it would contain an arc.  By \eqref{eq:V}, $V=0$ over that arc and, therefore, $h$ maps the arc into a point. Yet $h$, a harmonic polynomial of degree $n$, has finite valence $\leq n^2$, a contradiction.

If $f'\neq 0$ on $\partial D$ then all the functions (i.e. $f(z)$, $v(z)$ and $q'_m(z)$) that appear in the expression \eqref{eq:V-cusp} for the tangent vector, $V$, of the caustic are smooth along $\partial D$. Therefore, from \eqref{eq:V-cusp}, a singularity of $h(\partial D)$ occurs only when $V$ changes sign, hence $h(\partial D)$ must have a cusp there.  The lemma follows immediately.

If $f'(z_0)=0$ for $z_0\in\partial D$ (i.e., it is  a critical point of the lemniscate), then $|v(z_0)|=\infty$ by \eqref{eq:v}. However, the argument  of $V$ (i.e., $\arg V$) is still either continuous or jumps by $\pi$.  And a cusp occurs when $V$ changes direction, i.e., $\arg V$ jumps by $\pi$.
\end{proof}

If $f'(z_0)=0$ for $z_0\in\partial D$, multiple components of $\Omega$ merge together at $z_0$.  It turns out that the image of $\partial \Omega$ under $h$ has an interesting structure, as we will see below.

\vspace{0.3cm}
\noindent{\it Remark (a critical point on the critical lemniscate).}
Let $z_0\in\partial\Omega$ satisfy $f'(z_0)=\cdots=f^{(k-1)}(z_0)=0$ and $f^{(k)}(z_0)\neq 0$, i.e.,
\begin{equation*}
 	f(z)=f(z_0) + \frac{f^{(k)}(z_0)}{k!}(z-z_0)^{k}+{\mathcal O}\big((z-z_0)^{k+1}\big).
 \end{equation*}
Taking the absolute value and rotating if necessary, we obtain
\begin{equation}\label{eq:fabs}
 	|f(z)|=|f(z_0)| + \frac{|f(z_0)|}{k!}{\rm Re}\left[\frac{f^{(k)}(z_0)}{f(z_0)}(z-z_0)^{k}\right]+{\mathcal O}\big((z-z_0)^{k+1}\big).
 \end{equation}
Locally, the lemniscate consists of $2k$ curved rays meeting at $z_0$ and it divides the plane into $2k$ wedge-shaped sections with the angle $\pi/k$ at $z_0$.  Among them, total $k$ sections, equally spaced, are included in $\Omega$.
To be more precise, taking a sufficiently small disk $B$ centered at $z_0$, $B\cap\Omega$ has exactly $k$ components such that $B\cap\partial\Omega$ consists of $2k$ curves emanating from $z_0$ with the angular directions given by
\begin{equation*}
\theta_j=	\frac{1}{k}\arg\frac{f(z_0)}{f^{(k)}(z_0)}+\frac{\pi}{k}\bigg(j+\frac{1}{2}\bigg),\quad j=0,1,\cdots,2k-1.
\end{equation*}
Note that the sections between $\theta_{2\ell}$ and $\theta_{2\ell+1}$ for every $\ell=0,1,\cdots, k-1$ are in $\Omega$.  Therefore, taking the single component (let us denote it by $D_\ell$) of $B\cap\Omega$ between $\theta_{2\ell}$ and $\theta_{2\ell+1}$, the tangent vector of its boundary changes angular direction from $\theta_{2\ell+1}-\pi$ to $\theta_{2\ell}$ at $z_0$.    Using \eqref{eq:V-cusp}, assuming $z_0$ is not in the branch cut of $\sqrt{f}$, the corresponding tangent vector of $h(\partial D_\ell)$ changes the direction by $\pi$ at $h(z_0)$ (i.e., has a cusp singularity) if and only if
\begin{equation*}
	{\rm Im}\left(e^{i(\theta_{2\ell+1}-\pi)}q'_m(z_0)\sqrt{f(z_0)}\right) 	{\rm Im}\left(e^{i\theta_{2\ell}}q'_m(z_0)\sqrt{f(z_0)}\right)<0
\end{equation*}
or, equivalently,
\begin{equation*}
	\theta_{2\ell}+\arg\left(q'_m(z_0)\sqrt{f(z_0)}\right)\in \left(0,\frac{k-1}{k}\pi\right)\cup\left(\pi, \frac{2k-1}{k}\pi\right).
\end{equation*}
When $k$ is even (respectively odd) there can be at most two (respectively one) values of $\ell$'s that do {\em not} satisfy the above condition.  It means that for those values of $\ell$'s $h(\partial D_\ell)$ has a smooth boundary at $h(z_0)$, and for the other values of $\ell$'s, $h(\partial D_\ell)$ has a cusp at $h(z_0)$.  In Figure \ref{fig:petals}, the middle picture shows a caustic where the critical point $z_0=0$ corresponds to the three cusps at the origin, and the last picture shows a caustic where one component (red) of the lemniscate maps $z_0$ to a regular point of the caustic.

\begin{figure}
\includegraphics[width=0.2\textwidth]{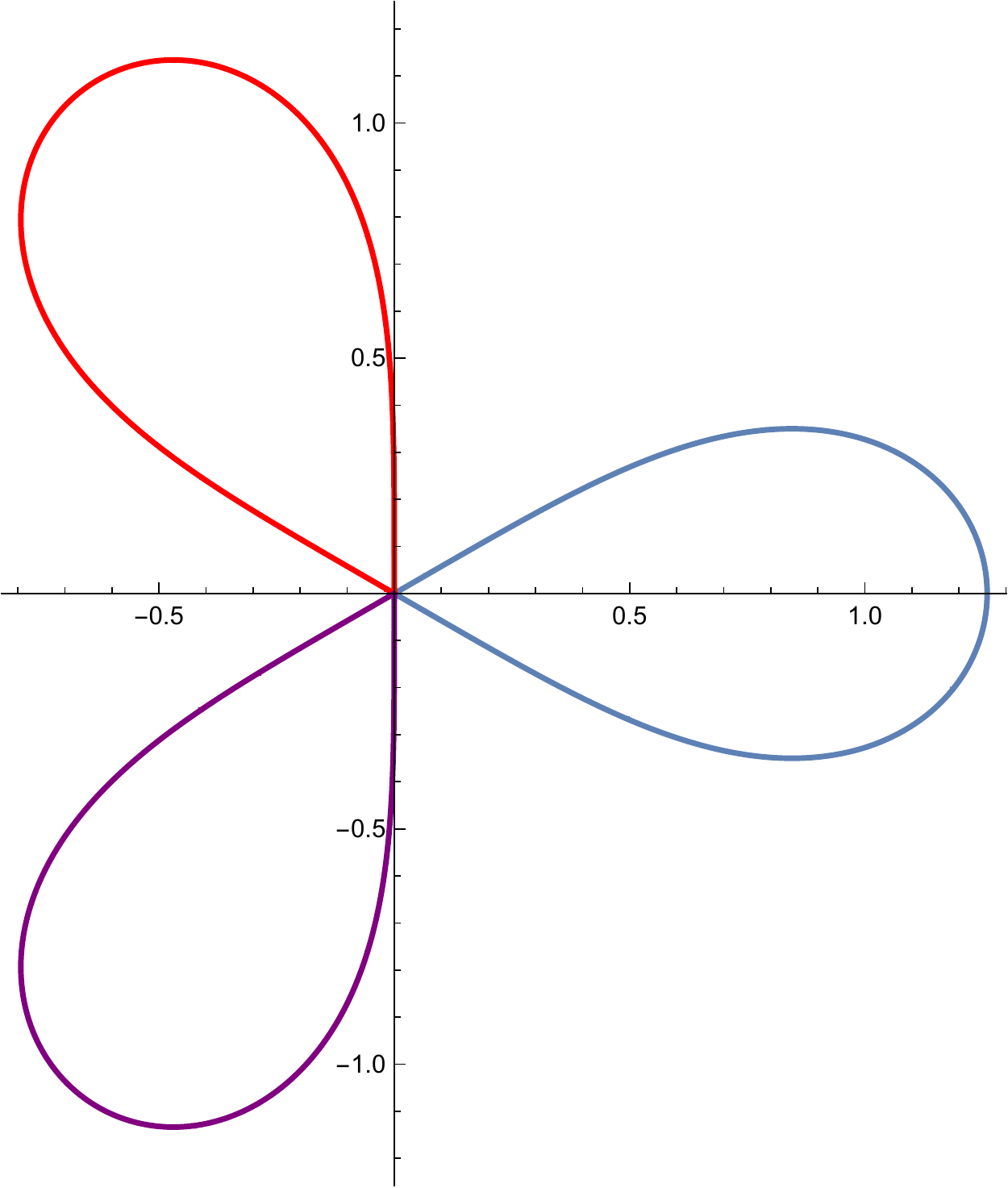}
\includegraphics[width=0.35\textwidth]{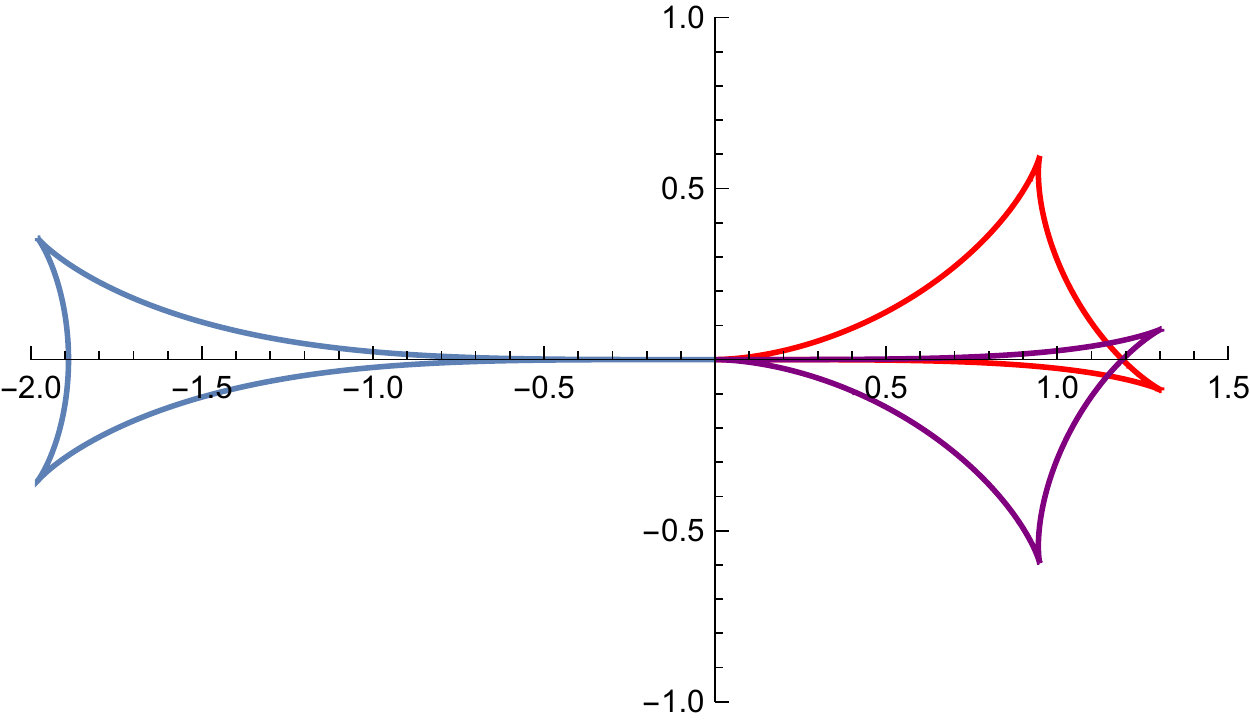}
\includegraphics[width=0.35\textwidth]{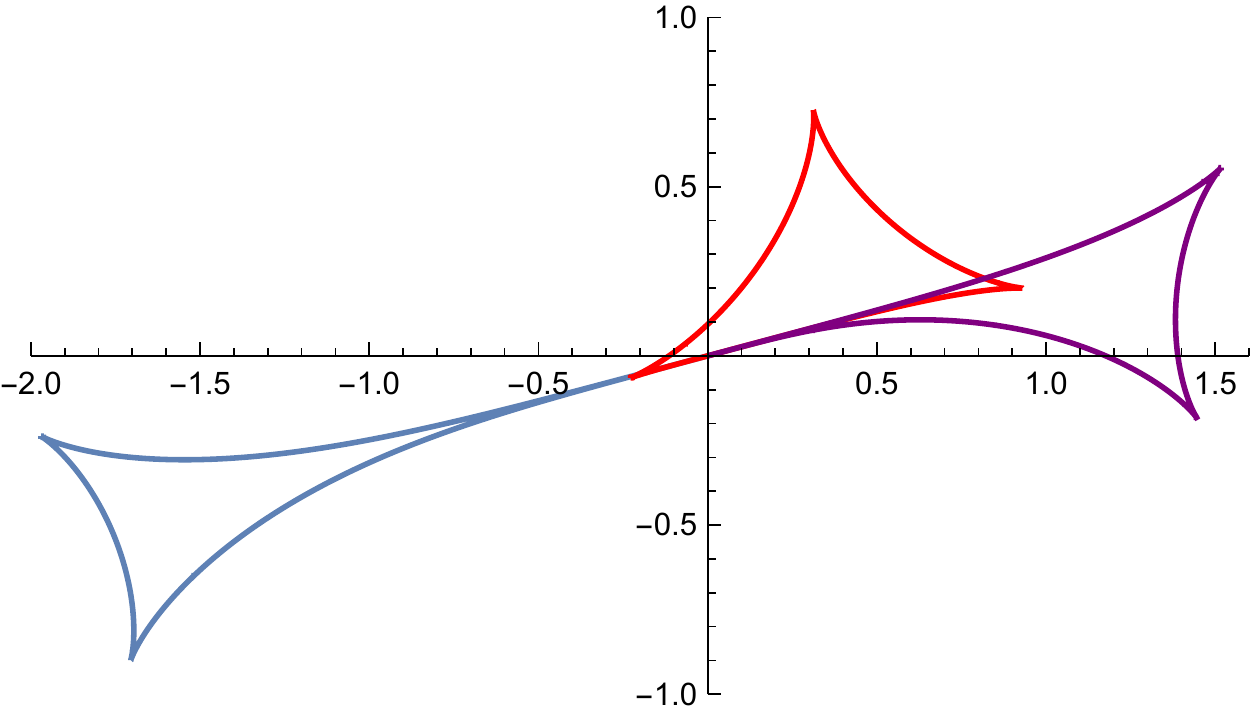}
\caption{\label{fig:petals} Lemniscate (left) and caustics, $h(z)=\frac{z^4}{4}-z-e^{i\theta}\overline z$ for $\theta=0$ and $\theta=\frac{\pi}{6}$.}
\end{figure}

\begin{lemma}\label{lem:cuspnumber} Let $D$ be a {\em simply connected} component of $\Omega$ with exactly $k$ zeros of $f$.  The number of cusps in $h(\partial D)$ is odd (resp. even) when $k$ is odd (resp. even) and, moreover, is $\geq 2+k$.
\end{lemma}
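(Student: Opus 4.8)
The plan is to prove both assertions at once by showing that the number of cusps $C$ equals the number of sign changes of a single real-valued function on $\partial D$, and then evaluating the associated winding number by the argument principle. The whole argument reduces to one computation of $\Delta_{\partial D}\arg G$ for a suitable $G$.

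First I would record the analytic data on $\partial D$. Since $D\subset\Omega=\{|f|<1\}$ and the poles of $f=p_n'/q_m'$ lie in $\{|f|=\infty\}$, the function $q_m'$ does not vanish on $\overline{D}$, so $f$ is analytic on $D$ with exactly $k$ zeros, which are precisely the $k$ zeros of $p_n'$ in $D$. By Lemma \ref{lem:D-univalent}, $f:D\to\mathbb{D}$ is a proper degree-$k$ branched cover of one topological disk onto another; Riemann--Hurwitz then gives $\chi(D)=k\chi(\mathbb{D})-R$, i.e. $1=k-R$, so $f'$ has exactly $R=k-1$ zeros in $D$. On $\partial D$ one has $|f|=1$, hence $f\neq0$ and (since $q_m'\neq0$) $p_n'=f q_m'\neq0$; assuming the generic case $f'\neq0$ on $\partial D$ (so $\partial D$ is smooth and all ramification is interior), the three functions $f,p_n',f'$ are all nonvanishing on $\partial D$.

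Next I would rewrite the cusp condition of Lemma \ref{lem:caustic-cusp} in terms of a single function. Using $v=if/f'$ and $f q_m'=p_n'$, the quantity controlling cusps becomes
$$ v\,q_m'\,\sqrt{f}=i\,\sqrt{f}\,\frac{p_n'}{f'}=i\,G,\qquad G:=\sqrt{f}\,\frac{p_n'}{f'}, $$
so $\mathrm{Im}\big(v q_m'\sqrt f\big)=\mathrm{Re}\,G$, and by Lemma \ref{lem:caustic-cusp} the cusps of $h(\partial D)$ occur exactly at the sign changes of this function. The only annoyance is the branch of $\sqrt f$, which is why that lemma splits into two cases; I would avoid the split by fixing a \emph{continuous} determination of $\arg f$ (hence of $\sqrt f$ and of $G$) along $\partial D$ via the parametrization $\eta$, so that $C$ is simply the number of sign changes of the continuous real function $\mathrm{Re}\,G$, equivalently the number of crossings of the imaginary axis by the curve $\theta\mapsto G(\eta(\theta))$. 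Since $G\neq0$ on $\partial D$, these crossings are genuine. Then, along one traversal of $\partial D$ (that is, $\theta$ from $0$ to $2\pi k$, as $f$ winds $k$ times), $\arg f$ increases by $2\pi k$, while $\Delta_{\partial D}\arg p_n'=2\pi k$ and $\Delta_{\partial D}\arg f'=2\pi(k-1)$ by the zero counts above, whence
$$ \Delta_{\partial D}\arg G=\tfrac12\,\Delta\arg f+\Delta\arg p_n'-\Delta\arg f'=\pi k+2\pi k-2\pi(k-1)=\pi(k+2). $$
The net signed number of imaginary-axis crossings of $G$ is $\Delta_{\partial D}\arg G/\pi=k+2$, so the total number of crossings satisfies $C\ge k+2$, and $C$ has the same parity as $k+2$, i.e. as $k$; this yields both claims simultaneously.

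The main obstacle, and the step I would treat most carefully, is the bookkeeping of the square-root branch together with the Riemann--Hurwitz count for $f'$: it is exactly the half-integer winding $\tfrac12\Delta\arg f=\pi k$ coming from $\sqrt f$ that produces the parity, and it is the $-2\pi(k-1)$ contributed by the $k-1$ critical points of $f$ that upgrades a naive bound to the sharp $k+2$. I would also verify the genericity hypotheses that make the argument principle legitimate here (namely $q_m'\neq0$ on $\overline D$ and $f'\neq0$ on $\partial D$, so that $\partial D$ is smooth and $G$ never vanishes), and confirm the elementary inequality ``number of crossings of a line $\ge$ $|$net signed crossings$|$'', i.e. that any non-monotonicity of $\arg G$ only increases $C$.
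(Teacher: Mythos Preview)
Your argument is correct and lands on the same number $(k+2)\pi$ for the total increment of the cusp-detecting argument, and from there the parity and lower bound follow exactly as in the paper. The overall architecture---track the argument of $v\,q_m'\,\sqrt{f}$ (equivalently your $G$) along $\partial D$, count crossings of the imaginary axis, and use that the net signed crossing number equals $\Delta\arg G/\pi$---is the paper's as well.

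The one genuine difference is in how the increment is computed. The paper splits $\arg(v q_m'\sqrt f)$ into $\arg v+\arg q_m'+\arg\sqrt f$ and evaluates the pieces geometrically: $\Delta\arg q_m'=0$ because $q_m'$ is zero-free on $\overline D$, $\Delta\arg\sqrt f=k\pi$, and $\Delta\arg v=2\pi$ because the tangent direction to the boundary of a simply connected domain winds once (with some care at critical points of the lemniscate, where $\arg v$ jumps). You instead rewrite $v q_m'\sqrt f=i\sqrt f\,p_n'/f'$ and apply the argument principle to $p_n'$ and $f'$ separately, invoking Riemann--Hurwitz for the branched cover $f:D\to\mathbb D$ to get exactly $k-1$ critical points of $f$ in $D$. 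This is a clean substitute for the tangent-winding step and makes the role of simple connectivity of $D$ explicit (it enters as $\chi(D)=1$ in Riemann--Hurwitz, rather than as the rotation index of $\partial D$). Each route buys something: the paper's version works verbatim when $f'$ vanishes on $\partial D$ (it tracks the jump of $\arg v$ across a lemniscate critical point), while yours is algebraically tidier but, as you note, is written under the genericity assumption $f'\neq0$ on $\partial D$; you would need a limiting argument or the paper's jump analysis to cover that boundary case.
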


\begin{proof}
According to Lemma \ref{lem:caustic-cusp} a cusp in $h(\partial D)$ occurs whenever
${\rm Im}\big( v \,q'_m \sqrt f \big)$  changes sign.  To locate such events, it is convenient to define the function, $\Psi:[0,2\pi k)\to\RR$ by
\begin{equation}\label{eq:Psi}
\Psi:\theta\to \arg\Big( v(z) \,q'_m(z) \sqrt{f(z)} \Big)_{z=\eta(\theta)} = \arg v(\eta(\theta)) + \arg q'_m(\eta(\theta)) + \arg \sqrt{f(\eta(\theta))}.
\end{equation}
For the term $\arg q'_m(\eta(\theta))$, we choose the branch of the function ``$\arg$'' such that the term is continuous with respect to $\theta$.
For the term $\sqrt {f(\eta(\theta))}$ we  choose the branch of $\sqrt f$ and ``$\arg$'' function (separately from the previous one), so that that term is continuous.
Lastly, for the first term, $\arg v$, we choose the branch of ``$\arg$'' such that the term is a piecewise continuous function where the only discontinuities are at the critical points of the lemniscate (i.e., where $f'(\eta(\theta))=0$).  At the discontinuity $\arg v$ jumps by a positive angle, $\frac{k-1}{k\pi}\in [\pi/2,\pi)$ from $\theta_{2\ell+1}-\pi$ to $\theta_{2\ell}$ using the notations in the above Remark  ``a critical point on the critical lemniscate''; the angle can vary according to the order $k\geq 2$ of the critical point.  As a consequence, $\Phi$ is a continuous function with only discontinuities being the jump(s) by angles in $[\pi/2,\pi]$ at the critical points of the lemniscate.

For a cusp to occur at $\theta$, the condition in Lemma \ref{lem:caustic-cusp} gives that
\begin{equation}\label{eq:graph}
\bigcap_{\epsilon>0}(\Psi(\theta-\epsilon),\Psi(\theta+\epsilon)) \text{~ contains $ l\pi$ for some integer $l$.}
\end{equation}
Let us look at the three terms in $\Psi$ individually. The last term is linear in $\theta$ with the slope $1/2$ by \eqref{eq:V-sqrt-f} and, therefore, $\Delta \arg \sqrt {f} = k\pi$.  (Here and below $\Delta$ stands for the increment over $\partial D$.) The second term, $\arg q'_m$, is a continuous function and $\Delta \arg q'_m=0$ because $q'_m$ does not vanish in the closure of $D$.  Lastly, the first term, $\arg v$, is a piecewise continuous function with $\Delta \arg v = 2\pi$ where the only discontinuities are at the critical points of the lemniscate (i.e., where $f'(\eta(\theta))=0$).    Summing up, the total increment of $\Psi$ over $[0,2k\pi)$ is $(2+k)\pi$ and, therefore, there are at least $k+2$ points (and exactly $k+2$ if $\Psi$ is monotone) where the condition \eqref{eq:graph} holds.

Suppose now that $\Psi$ is not monotone.  For any point $\theta$ that satisfies the condition \eqref{eq:graph} there are two possibilities: as $\epsilon\to +0$,
 $$\text{(A):}~~ \Psi(\theta-\epsilon)<l\pi \text{ and } \Psi(\theta+\epsilon)>l\pi\quad\text{ or }\quad  \text{(B):}~~\Psi(\theta-\epsilon)>l\pi \text{ and } \Psi(\theta+\epsilon)<l\pi. $$
Since the total increment of $\Psi$ is $(k+2)\pi$, the number of points satisfying the condition (A) must be larger than those satisfying (B) by exactly $k+2$.
Therefore the number of points satisfying either (A) or (B) can be bigger than $k+2$ by an even number.  As a consequence, the number of cusps is always odd (respectively even) when $k$ is odd (respectively even).
\end{proof}

\begin{lemma}\label{lem:cusps-univalency}
Let $D$ be a component of $\Omega$ with a single zero of $f$. If $h(\partial D)$ has only three cusps, then it is a Jordan curve. There are more than three cusps in $h(\partial D)$ (hence five or more according to Lemma \ref{lem:cuspnumber}) if and only if there exists a point with the winding number of $h(\partial D)$ bigger than one, i.e., there exists $p\in\CC$ such that $\Delta_{\partial D}\arg (h(\cdot)-p)\geq 2\pi$.
\end{lemma}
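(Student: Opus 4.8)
The plan is to recognize the caustic as a \emph{hedgehog} (a closed curve with exactly one tangent line in each direction) and to reduce both statements to counting its self--intersections. Since $D$ contains a single zero of $f$, Lemma~\ref{lem:D-univalent} makes $f\colon D\to\mathbb{D}$ univalent, so $\eta(\theta)=f^{-1}(e^{i\theta})$ parametrizes $\p D$ over $[0,2\pi)$ and $\gamma(\theta):=h(\eta(\theta))$ is the caustic. By Lemma~\ref{lem-caustic} the tangent direction $\Phi=\arg\gamma'$ satisfies $\Phi'=\tfrac12$ off the cusps; hence the \emph{unoriented} tangent direction $\Phi\bmod\pi$ is a strictly increasing bijection of $[0,2\pi)$ onto a half--turn, so no two points of $\gamma$ have parallel tangents and every self--intersection of $\gamma$ is transversal at smooth points. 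Tracking $\Phi$ continuously, each cusp forces a jump of $+\pi$ (the smooth turning is $+\tfrac12$ on both sides of every cusp, which pins the reversal to the same sense), so the total turning is $(N+1)\pi$ and the turning number is $\mu=(N+1)/2$, where $N$ is the (odd, $\ge 3$) number of cusps from Lemma~\ref{lem:cuspnumber}.

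Next I would translate ``winding'' into ``multiplicity.'' For a regular value $q\notin\gamma$, the argument principle for harmonic functions together with the sense--reversal on $D\subset\Omega$ (the Jacobian of $h$ is negative there, so $N_+=0$) gives $\Delta_{\p D}\arg(h-q)=-2\pi N_-(q)$, where $N_-(q)=\#\{z\in D:h(z)=q\}\ge 0$. Thus the winding number of $\gamma$ about $q$ equals $-N_-(q)$, and ``winding number bigger than one'' means $N_-(q)\ge 2$. Because $h$ immerses the interior of $D$ and folds $\p D$, we have $N_-=0$ on the unbounded region and $N_-$ changes by $1$ across each smooth arc of $\gamma$. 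The key local fact is then: at any transversal self--crossing the four adjacent regions carry $N_-$--values $c,\,c+s_1,\,c+s_2,\,c+s_1+s_2$ with $s_1,s_2\in\{\pm1\}$, so the largest exceeds the smallest by exactly $2$; since all are $\ge 0$, some region has $N_-\ge 2$. Hence $\gamma$ has a self--intersection $\iff$ some point has $N_-\ge 2$, and the whole lemma reduces to the single assertion: \emph{$\gamma$ is embedded $\iff N=3$.}

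For $N=3$ the curve consists of three strictly convex arcs (no inflections, since $\rho=s+s''$ keeps one sign along each arc) meeting at three cusps, with the three arcs' tangent directions partitioning the half--turn. I would prove embeddedness on the deltoid model: the tangent lines at the three cusps bound a triangle, each convex arc is confined to the corresponding corner, and two arcs can never cross, since a crossing would produce two points sharing a tangent direction, contradicting the hedgehog property. Hence $\gamma$ is a Jordan curve, its interior is covered once and its exterior not at all, $M:=\max_q N_-(q)=1$, and no point has winding bigger than one; this gives the statement for three cusps and the ``$\Leftarrow$'' direction of the equivalence.

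The hard part is the converse, $N\ge 5\Rightarrow\gamma$ is not embedded (equivalently, an embedded hedgehog has exactly three cusps). The naive route of rounding the cusps and invoking $\mu\ge 3\neq\pm1$ \emph{fails}: rounding an outward cusp by the forced $+\pi$ cap already manufactures a small loop, so the smoothed curve is non--embedded even when $\gamma$ is (already for the deltoid, $\mu=2$). Instead I would count \emph{genuine} double points from the convex--arc structure itself: our front has no inflections and, by the single--tangent property, no double tangent lines, so a Fabricius--Bjerre--type relation for curves with cusps forces the number of crossings to grow with $N$; concretely, more than three convex arcs whose tangents sweep only a half--turn cannot be assembled into an embedded closed curve. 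An alternative I would try is an induction that peels off one convex ``outer'' arc, lowering $N$ by two and $\mu$ by one while preserving the hedgehog structure, down to the base case $N=3$ of the previous paragraph. Either way one produces a transversal self--crossing, whence $M\ge 2$ by the local fact above. This global coupling of the cusp count to embeddedness is the main obstacle, and it is genuinely global: the local/Gauss--Bonnet data only yield $\sum_i\epsilon_i=\pi$ and are consistent with every odd $N$.
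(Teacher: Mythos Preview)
Your treatment of the three–cusp case and of the local winding–number jump at a transversal self–intersection is fine and parallels the paper. The gap is in the converse direction $N\ge 5\Rightarrow\gamma$ not embedded, and it is caused by a sign error in your turning–number bookkeeping, not by any genuine global obstruction.

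You claim that each cusp ``forces a jump of $+\pi$'' because $\Phi'=+\tfrac12$ on both sides. That is not so: the jump of $\Phi$ at a cusp is $\pm\pi$, and the sign is \emph{not} intrinsic---it is determined by which side you smooth on. What matters is which smoothing is \emph{embedded}. Locally, with $\Phi'=+\tfrac12$ the two arcs of the caustic lie on the same side of the cusp tangent, namely the side in the direction $e^{i\phi_+}$ (here $\phi_\pm$ are the one–sided tangent directions, $e^{i\phi_+}=-e^{i\phi_-}$); a short expansion of $\gamma(\theta)-\gamma(\theta_0)$ to cubic order shows this. Hence the small smoothing cap placed on the \emph{opposite} side (direction $e^{i\phi_-}$) is disjoint from the rest of the curve, and along that cap the tangent turns from $\phi_-$ through $\phi_--\tfrac{\pi}{2}$ to $\phi_+$: a $-\pi$ turn, not $+\pi$. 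Your ``$+\pi$ cap'' is the smoothing on the side where the curve already sits, and that is precisely why it manufactures a loop; the paper's ``shortcut'' smoothing (see its Figure) is the $-\pi$ one, and it is embedded for each cusp provided the caps are small.

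With the correct sign the smoothed curve $\Gamma$ has total turning
\[
\tfrac12\cdot 2\pi-N\pi=(1-N)\pi,
\]
so turning number $(1-N)/2$. For $N=3$ this is $-1$, consistent with an embedded deltoid (your ``$\mu=2$'' was the artifact of the wrong cap). For $N\ge 5$ the turning number is $\le -2$, so by the Hopf Umlaufsatz $\Gamma$ is not a Jordan curve; since the caps introduced no new crossings, $\gamma=h(\partial D)$ itself has a transversal self–intersection, and your own local argument then gives a region with $N_-\ge 2$. This is exactly the paper's proof. No Fabricius--Bjerre relation or inductive ``peeling'' is needed; once you fix the sign, the ``naive route'' goes through.
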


\begin{proof}
To prove the first statement of the lemma, it suffices to show that the three smooth (open) arcs between the three cusps do not intersect each other.
Choosing any two arcs, there exists a cusp where the two arcs meet.
Since the tangent vector rotates at most by $\pi$ over a smooth part of the curve (cf. Lemma \ref{lem-caustic}), the two arcs can only get farther from each other as one moves along the arcs starting from the common cusp.
(Note however that, if the constant curvature is bigger than $1/2$, $h(\partial D)$ can self-intersect.) 

To prove the second statement, assume $h(\partial D)$ has five or more cusps.  Let $\Gamma$ be a smooth curve that is obtained by slightly ``smoothing'' all the cusps of $h(\partial D)$, see the left picture in Figure \ref{fig:smoothcusp}.  Considering infinitessimally small smoothing, such deformation indicates that the tangent vector of $\Gamma$ rotates by
 $$\frac{1}{2}\cdot 2\pi-\#\{\text{cusps}\}\pi\leq \pi -5\pi = - 4\pi$$
 over the whole curve.
\begin{figure}
\includegraphics[width=0.6\textwidth]{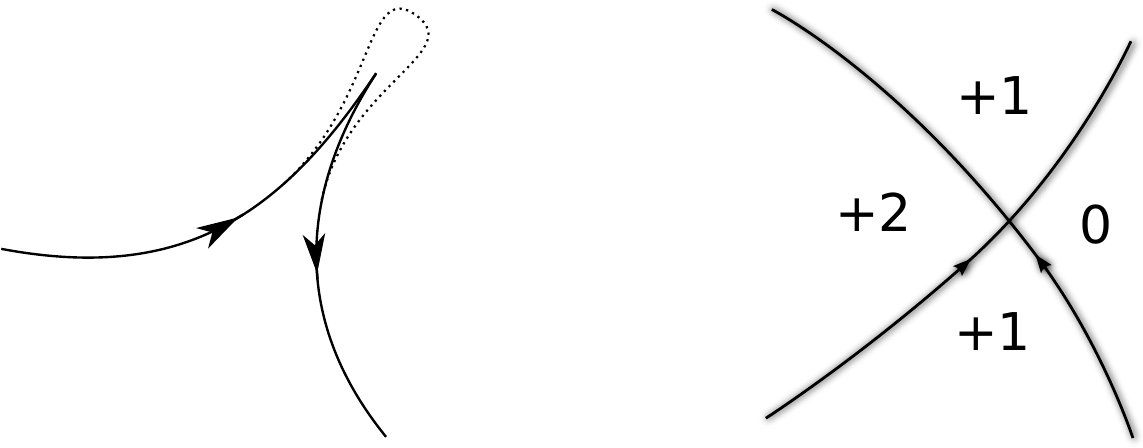}
\caption{\label{fig:smoothcusp} Cusp in caustic (left). Dotted line shows a ``cusp after smoothing''.  Relative winding numbers in a region around an intersection point (right).}
\end{figure}
Therefore $\Gamma$ cannot be a (smooth) Jordan curve and $h(\partial D)$ must have a self-intersection (that cannot be removed by a small perturbation, such as the one in the right picture of Figure \ref{fig:smoothcusp}).

Let us define the orientation on $h(\partial D)$ by the orientation inherited from $\partial D$.  For each point $p\notin h(\partial D)$, one can consider the winding number of $h(\partial D)$ around $p$.  The winding number of $h(\partial D)$ on the left side (with respect to the orientation) is bigger than the one on the right side by $+1$.  Then, in a neighborhood of a self-intersection, there must be a pair of regions where the winding numbers differ by $2$, see the right picture in Figure \ref{fig:smoothcusp}.
\end{proof}

Now we prove Theorem \ref{thm-3}.

To have more than three cusps, the function $\Psi$ that is defined in the proof of Lemma \ref{lem:cuspnumber} must be non-monotonic, i.e., there must be a point where the slope of the graph of $\Psi$ is negative, i.e.
\begin{equation}\label{eq:negative-slope}
\frac{d\Psi(\theta)}{d\theta}=	\frac{d\arg v(\eta(\theta))}{d\theta} + \frac{d\arg q'_m(\eta(\theta))}{d\theta} + \frac{1}{2}<0.
\end{equation}
Note that the first two terms in the left hand side give the curvature of $q_m(\partial D)$ with respect to the parametrization by $\theta$ or, equivalently, $\kappa/|f'|$ where $\kappa$ is the curvature with respect to the arclength on $\partial D$ (cf. \eqref{eq:v}):
$$ \left|\frac{d\eta(\theta)}{d\theta}\right| \kappa= \frac{d}{d\theta}\arg\frac{d q_m(\eta(\theta))}{d\theta} =\frac{d\arg v(\eta(\theta))}{d\theta} + \frac{d\arg q'_m(\eta(\theta))}{d\theta}.  $$
 So the inequality \eqref{eq:negative-slope} is exactly the one that appears in i) of Theorem \ref{thm-3}.

Once the graph of $\Psi$ is non-monotonic, one can create more roots of $\Psi \equiv 0 \mod\pi$ by vertically shifting this graph. This is done by the following transformation, which preserves the lemniscate $\{z:|p'_n(z)/q'_m(z)|=1\}$:
  $$p_n \to e^{i\varphi} p_n, \quad \varphi\in{\mathbb R}.$$
Under this transformation, $$\arg(v\,q'_m\sqrt f) \rightarrow \arg(v\,q'_m\sqrt f)+\varphi/2$$ because of the term $\sqrt f$.  This means that, for some $\varphi$, $h(\partial D)$ can have more than five cusps.   By Lemma \ref{lem:cusps-univalency}, there exists a point, say $p\in {\mathbb C}$, where $\Delta_{\partial D}(h(\cdot)-p)>2\pi$.  This means that
  $$\widetilde p_n(z) =  e^{i\varphi} p_n(z) - p $$
satisfies $\Delta_{\partial D}\widetilde h>2\pi$ where $\widetilde h = \widetilde p_n +\overline{q_m}$ and, therefore, $\widetilde h$ has at least two roots inside $D$.  This ends the proof of Theorem \ref{thm-3}.

\section{Construction of non-convex lemniscate}\label{sec:nonconvex}

Here we explain how we found the example in Figure \ref{fig:41concave}.

\begin{thm}\label{lem-no-inflec}
Let $f$ be a rational function and $\Omega=\{z:|f(z)|<1\}$.
Let $z_0 \in \p \Omega$, $f'(z_0) = 0$  and $(\log f)''(z_0) \neq 0$.
In a neighborhood of $z_0$, $\partial\Omega$ is a union of two smooth arcs that intersect perpendicularly at $z_0$.  Moreover, the point $z_0$ is not an inflection point (i.e. the curvature is strictly positive or negative at $z_0$) for either of the arcs if and only if
\begin{equation*}
	{\rm Re}\bigg( e^{\pm i\pi/4} \frac{(\log f)'''(z_0)}{(\log f)''(z_0)^{3/2}}\bigg)\neq 0.
\end{equation*}
\end{thm}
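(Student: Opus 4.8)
The plan is to reduce everything to the logarithm and then to \emph{straighten} the lemniscate by a conformal change of variable. Since $z_0\in\p\Omega$ we have $|f(z_0)|=1$, so $f(z_0)\neq 0,\infty$ and $g:=\log f$ is holomorphic near $z_0$ with ${\rm Re}\,g(z_0)=0$; locally $\p\Omega=\{z:{\rm Re}(g(z)-g(z_0))=0\}$. Because $f'(z_0)=0$ we get $g'(z_0)=0$, while $g''(z_0)=(\log f)''(z_0)\neq0$ by hypothesis. Writing $w=z-z_0$ and $\Phi(w)=g(z_0+w)-g(z_0)=c_2w^2+c_3w^3+O(w^4)$ with $c_2=g''(z_0)/2$ and $c_3=g'''(z_0)/6$, the function $\Phi$ has a double zero at $w=0$.

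First I would factor out this double zero: write $\Phi=F^2$, where $F(w)=\sqrt{\Phi(w)}=\sqrt{c_2}\,w+\frac{c_3}{2\sqrt{c_2}}\,w^2+O(w^3)$ is, for either branch, a local biholomorphism fixing $0$. In the coordinate $\zeta=F(w)$ the lemniscate is exactly $\{{\rm Re}\,\zeta^2=0\}$, i.e. the two perpendicular straight lines through the origin with directions $e^{i\pi/4}$ and $e^{3i\pi/4}$. Since $F$ is conformal, pulling these back yields two smooth (indeed analytic) arcs meeting orthogonally at $z_0$, which is the first assertion of the theorem. (This is the $k=2$ instance of the Remark preceding Lemma~\ref{lem:cuspnumber}: a simple zero of $f'$ produces $2k=4$ lemniscate rays with opening angle $\pi/k=\pi/2$.)

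The key point, and the step where one must be careful, is that curvature and the inflection property are \emph{not} conformally invariant, so it is not enough to note that the arcs are straight in the $\zeta$-coordinate. Instead I would invoke the transformation law for curvature under a holomorphic map $\phi$: a curve with unit tangent $T$ and curvature $\kappa$ has image curvature $\frac{1}{|\phi'|}\big(\kappa+{\rm Im}(\tfrac{\phi''}{\phi'}T)\big)$, which follows in one line from $\delta=\phi\circ\gamma$, $\delta''=\phi''(\gamma')^2+\phi'\gamma''$ and $\kappa={\rm Im}(\ol{\delta'}\delta'')/|\delta'|^3$. Applying this to $\phi=F^{-1}$ and to the straight lines ($\kappa=0$), the curvature of each arc at $z_0$ is the nonzero multiple $\frac{1}{|\phi'(0)|}$ of ${\rm Im}\big(\tfrac{\phi''}{\phi'}(0)\,T\big)$, with $T=e^{i\pi/4}$ for one arc and $T=e^{3i\pi/4}$ for the other. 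A short inverse-function computation gives $\tfrac{\phi''}{\phi'}(0)=-F''(0)/F'(0)^2=-c_3/c_2^{3/2}$, and substituting $c_2,c_3$ yields $\tfrac{\phi''}{\phi'}(0)=-\tfrac{\sqrt2}{3}\,X$, where $X=(\log f)'''(z_0)/(\log f)''(z_0)^{3/2}$.

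Finally I would read off the criterion. The arc with tangent $T$ fails to have an inflection at $z_0$ exactly when its curvature is nonzero, i.e. when ${\rm Im}(XT)\neq0$; using ${\rm Im}(Xe^{i\pi/4})={\rm Re}(Xe^{-i\pi/4})$ and ${\rm Im}(Xe^{3i\pi/4})={\rm Re}(Xe^{i\pi/4})$, the two conditions together become ${\rm Re}(e^{\pm i\pi/4}X)\neq0$, as stated. The criterion is well posed because its only ambiguity, the branch of the square root in $(\log f)''(z_0)^{3/2}$, sends $X\mapsto-X$, under which the symmetric condition ${\rm Re}(e^{\pm i\pi/4}X)\neq0$ is unchanged (and the corresponding sign flip of $\sqrt{c_2}$ merely reverses the orientation of each arc, leaving the vanishing of the curvature intact). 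The only genuine obstacle is the non-invariance of curvature, which is dispatched by the transformation law, together with the bookkeeping of the factor $c_3/c_2^{3/2}$ through the square-root factorization.
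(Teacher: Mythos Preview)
Your argument is correct, and it takes a genuinely different route from the paper's own proof. The paper proceeds by writing an arclength parametrization $\gamma(t)$ of the lemniscate through $z_0$, expanding $\log|f(\gamma(t))|$ in a Taylor series, and setting the coefficients to zero; the vanishing of the $t^2$-coefficient forces $\dot\gamma(0)^2=ic_1\,\overline{(\log f)''(z_0)}$ (giving the two perpendicular tangent directions), and the vanishing of the $t^3$-coefficient then solves for the signed curvature $c_2$ in $\ddot\gamma(0)=ic_2\,\dot\gamma(0)$, yielding $c_2=-\tfrac13\,{\rm Im}\big(\dot\gamma(0)\,(\log f)'''(z_0)/(\log f)''(z_0)\big)$. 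Substituting the explicit $\dot\gamma(0)$ produces the stated criterion.

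Your approach instead straightens the lemniscate conformally via $\zeta=F(w)=\sqrt{\Phi(w)}$, reducing the structural claim (two orthogonal analytic arcs) to the trivial picture $\{{\rm Re}\,\zeta^2=0\}$, and then transports the curvature back through $F^{-1}$ using the standard formula $\kappa_\delta=\tfrac{1}{|\phi'|}\big(\kappa+{\rm Im}(\tfrac{\phi''}{\phi'}T)\big)$. The inverse-function identity $\phi''/\phi'(0)=-F''(0)/F'(0)^2=-c_3/c_2^{3/2}$ and the substitution $c_2=g''(z_0)/2$, $c_3=g'''(z_0)/6$ are carried out correctly, and the final conversion ${\rm Im}(Xe^{i\pi/4})={\rm Re}(Xe^{-i\pi/4})$, ${\rm Im}(Xe^{3i\pi/4})={\rm Re}(Xe^{i\pi/4})$ is right. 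Your remark that the branch ambiguity $X\mapsto -X$ leaves the symmetric condition ${\rm Re}(e^{\pm i\pi/4}X)\neq0$ unchanged is also apt. What your method buys is a cleaner separation between the topological picture (immediate from $\zeta$-coordinates) and the curvature computation (packaged in a single known transformation law), and it generalizes transparently to higher-order critical points via $\Phi=F^k$; the paper's approach is more self-contained, needing no auxiliary formula, but mixes the two steps together. Both compute the same invariant $g'''/(g'')^{3/2}$ up to a harmless constant.
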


\vspace{0.3cm}
\begin{cor}\label{cor:concave} If $z_0\in\partial\Omega$ satisfies the assumptions in Theorem \ref{lem-no-inflec} and, furthermore, is not an inflection point of $\partial\Omega$, then there is a connected component of $\Omega$ whose curvature of the boundary (with respect to the counterclockwise orientation) converges to a negative value at $z_0$. 
\end{cor}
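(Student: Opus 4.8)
The plan is to read off the local picture of $\Omega$ near $z_0$ directly from Theorem~\ref{lem-no-inflec} together with the preceding Remark ``a critical point on the critical lemniscate.'' Since $f'(z_0)=0$ while $(\log f)''(z_0)\neq 0$ forces $f''(z_0)\neq 0$, we are in the case $k=2$ of that Remark: in a sufficiently small disk $B$ about $z_0$, the set $B\cap\partial\Omega$ is the union of two $C^2$ arcs $\gamma_1,\gamma_2$ crossing perpendicularly at $z_0$, and $B\cap\Omega$ is the union of two opposite wedges, each with a right-angle vertex at $z_0$; call these components $D_1$ and $D_2$. I would fix normalized coordinates with $z_0=0$, after the rotation used in \eqref{eq:fabs}, in which to leading order $\gamma_1,\gamma_2$ are the diagonals $\{y=\pm x\}$ and $D_1,D_2$ are the two wedges about the imaginary axis.

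Next I would record the elementary sign convention: if a $C^2$ arc bounds a domain and is oriented so that the domain lies on its left (the counterclockwise orientation), then its signed curvature is positive exactly when the arc curves toward the domain (domain on the concave side) and negative exactly when it curves away. This is the normalization under which a disk has boundary curvature $+1/R$.

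The key structural observation is that along a single arc, say $\gamma_1$, the two $\Omega$-wedges sit on \emph{opposite} sides: near one ray issuing from $z_0$ the adjacent wedge lies on one side of $\gamma_1$, while near the opposite ray the adjacent wedge lies on the other side. This is immediate in the normalized coordinates, where the two wedges about the imaginary axis lie on opposite sides of the diagonal $\gamma_1$, and it is a purely topological feature of ``two opposite wedges,'' independent of the normalization. Since $z_0$ is assumed not to be an inflection point, Theorem~\ref{lem-no-inflec} guarantees that $\gamma_1$ has nonzero curvature at $z_0$; its curvature vector there, being a genuine normal, points to exactly one of the two sides of $\gamma_1$. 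The $\Omega$-wedge lying on the opposite side then has $\gamma_1$ curving away from it, so by the sign convention its boundary curvature, computed along $\gamma_1$ with respect to the counterclockwise orientation, converges to a strictly negative value as $z\to z_0$. This exhibits the required component and completes the argument.

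The only delicate point, and the step I expect to require the most care, is the orientation bookkeeping: one must verify that ``the $\Omega$-wedge on the far side of the curvature vector of $\gamma_1$'' is precisely the one whose counterclockwise boundary has the domain on its left, so that curving away produces a negative rather than positive signed curvature. I would check this once in the normalized coordinates and note that the conclusion is invariant under the allowed rotation. It is worth remarking that a single arc already suffices, so the hypothesis is more than enough: nonvanishing curvature of just one of $\gamma_1,\gamma_2$ would already force a concave component at $z_0$.
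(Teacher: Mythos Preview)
Your argument is correct and follows essentially the same idea as the paper: near $z_0$, $\Omega$ consists of two opposite wedges, and since they lie on opposite sides of each boundary arc, the nonvanishing curvature of an arc forces one wedge to have that arc curving away from it, hence negative signed boundary curvature. The paper's proof is terser---it simply lists the two possible local configurations (one convex plus one non-convex wedge, or two non-convex wedges) with reference to a figure---whereas you spell out the one-arc mechanism explicitly and correctly note that nonvanishing curvature of a single arc already suffices.
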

\begin{proof}
According to Theorem \ref{lem-no-inflec} there exists an open neighborhood $U$ of $z_0$
  such that $\Omega\cap U$ is the disjoint union of two domains, see Figure \ref{fig:two-cases} for an illustration ($\Omega\cap U$ is the shaded region).
When the ``no inflection'' condition in Theorem \ref{lem-no-inflec} is satisfied, the local configuration of $\Omega$ has two possibilities:
$\Omega\cap U$ is the disjoint union of a convex and a non-convex domain (left in Figure \ref{fig:two-cases}) or, $\Omega\cap U$ is the disjoint union of two non-convex domains (right in Figure \ref{fig:two-cases}).  In either case, there exists a connected component of $\Omega\cap U$ whose boundary has a negative curvature in a neighborhood of $z_0$.
\end{proof}

\begin{figure}
\begin{centering}
\includegraphics[width=0.55\textwidth]{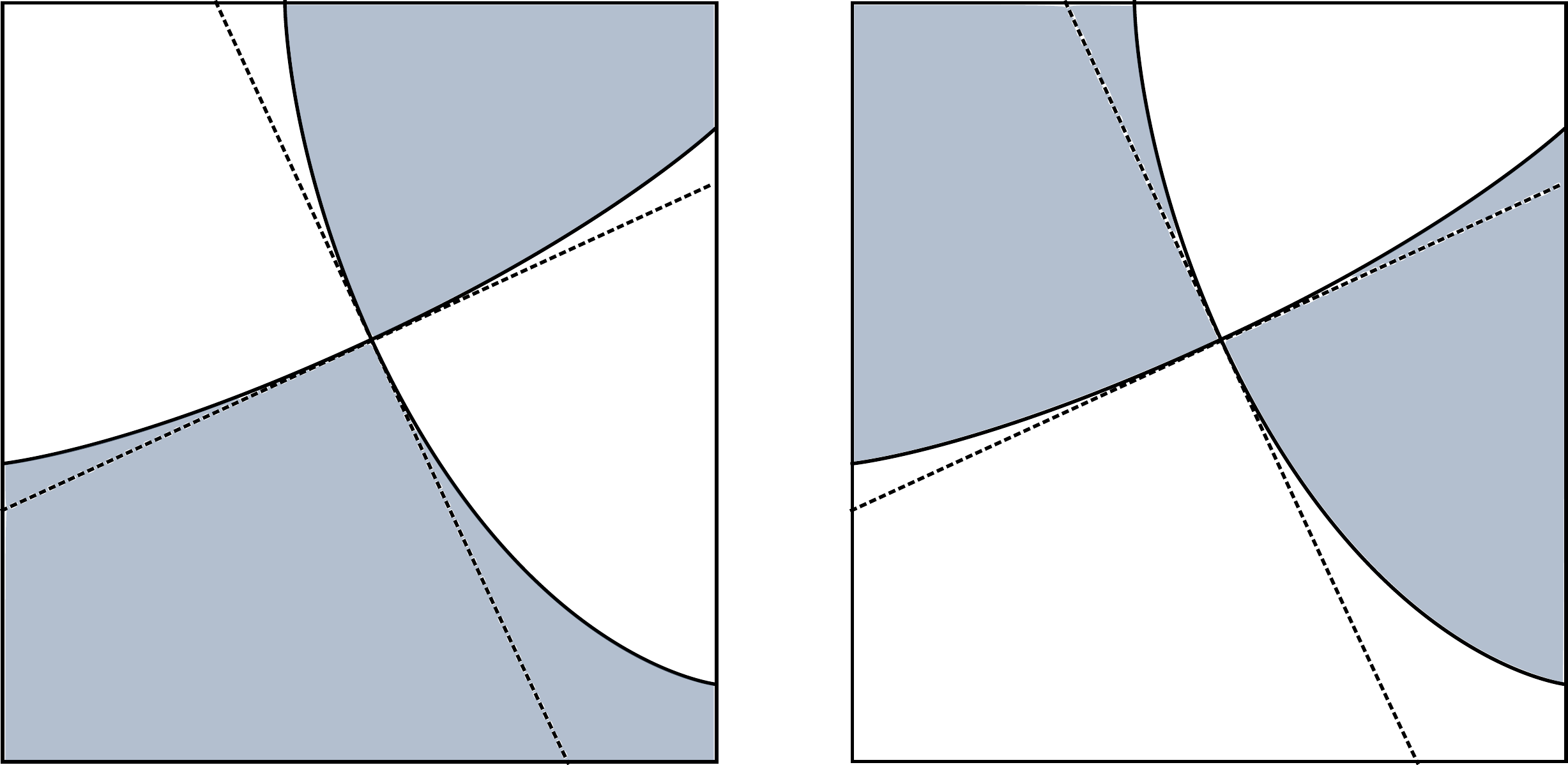}
\caption{\label{fig:two-cases} Lemniscate near $z_0$, see text below Theorem \ref{lem-no-inflec}.}
\end{centering}
\end{figure}

\noindent{\em Remark.}
{ To relate Theorem \ref{lem-no-inflec} to our harmonic polynomials, we take $f(z) = p_n'(z)/q_m'(z)$.
For $m=1$, as $z$ approaches $z_0$ along the ``concave'' arc, $\kappa(z)<0$ by Corollary \ref{cor:concave} and $|f'(z)|\to 0$.  Therefore, one obtains that $\kappa(z)/|f'(z)|\to -\infty$ as $z$ approaches $z_0$ along the ``concave'' arc.
Hence we obtain the ``non-convex'' domain $D$ as is needed to satisfy the condition $\kappa/|f'|<-1/2$ in Theorem \ref{thm-3}.
}

\vspace{0.3cm}
In the rest of this section, we prove Theorem \ref{lem-no-inflec}.

Let $f$ be a meromorphic function.
Let $\gamma:\RR\to\CC$ be the parametrization of a curve in $\CC$ such that $\gamma(0) = z_0$ and $f'(z_0) = 0$. Taking the derivatives of $\log \lvert f(\gamma(t)) \rvert$ at $t=0$ we obtain:
$$\frac{\textrm{d}}{\textrm{d}t} \log \lvert f(\gamma(t)) \rvert \bigg|_{t=0}=0,$$
$$\frac{\textrm{d}^2}{\textrm{d}t^2} \log \lvert f(\gamma(t)) \rvert\bigg|_{t=0} = \textrm{Re}\Big(\dot\gamma(0)^2 (\log f)''(z_0)\Big),$$
$$\frac{\textrm{d}^3}{\textrm{d}t^3}\log \lvert f(\gamma(t)) \rvert \bigg|_{t=0}= \textrm{Re}\Big(3\dot\gamma(0)\ddot\gamma(0) (\log f)''(z_0)+\dot\gamma(0)^3(\log f)'''(z_0)\Big).$$
Above, $\dot\gamma$ and $\ddot\gamma$ denote, respectively, the derivative and the second derivative of $\gamma$ with respect to $t$.
Consider the Taylor expansion of $\log|f(\gamma(t))|$ around $t=0$. We have
\begin{equation*}\begin{split}
\log|f(\gamma(t))| &= \textrm{Re}\Big(\dot\gamma(0)^2 (\log f)''(z_0)\Big)\frac{t^2}{2}
\\&+\textrm{Re}\Big(3\dot\gamma(0)\ddot\gamma(0) (\log f)''(z_0)+\dot\gamma(0)^3(\log f)'''(z_0)\Big)\frac{t^3}{6} + {\mathcal O}(t^4).
\end{split}
\end{equation*}
If $\gamma$ parmametrizes the lemniscate $\partial \Omega$ passing through $z_0$, where $\Omega$ is as in \eqref{eq:Omega}, then we have $\log|f(\gamma(t))|=0$ identically for all $t$, and all the coefficients in the above Taylor series must vanish.  The first coefficient vanishes when
\begin{equation}\label{eq:use1}
	\dot\gamma(0)^2 =  i c_1   \ol{(\log f)''(z_0)},\quad c_1\in\RR.
\end{equation}
We may assume that $\gamma$ is the arclength parametrization, i.e., $|\dot\gamma|\equiv 1$.
Then $\ddot\gamma\overline{\dot\gamma}$ is purely imaginary and
\begin{equation}\label{eq:use2}\ddot\gamma(0)=i c_2\, \dot\gamma(0)
\end{equation} for some real constant $c_2$.  The second coefficient in the Taylor series gets simplified as
\begin{equation*}
\begin{split}
&	\textrm{Re}\Big(3\dot\gamma(0)\ddot\gamma(0) (\log f)''(z_0)+\dot\gamma(0)^3(\log f)'''(z_0)\Big)
\\
&	=- 3c_1c_2 |(\log f)''(z_0)|^2 -c_1 {\rm Im}\Big( \dot\gamma(0)\overline{(\log f)''(z_0)} (\log f)'''(z_0)\Big),
	\end{split}
\end{equation*}
using \eqref{eq:use1} and \eqref{eq:use2}.
 The above  expression vanishing implies that
\begin{equation*}
	c_2 = - \frac{1}{3}{\rm Im}\bigg( \dot\gamma(0)\frac{(\log f)'''(z_0)}{(\log f)''(z_0)}\bigg).
\end{equation*}
Summarizing, we obtain
\begin{equation*}
	\gamma(t) = z_0 + \dot\gamma(0) t - \frac{i\,\dot\gamma(0)}{6}{\rm Im}\bigg( \dot\gamma(0)\frac{(\log f)'''(z_0)}{(\log f)''(z_0)}\bigg) t^2 + {\mathcal O}(t^3),
\end{equation*}
where, using \eqref{eq:use1}, 
\begin{equation*}
	\dot\gamma(0)=\pm e^{\pm i\pi/4}i  \frac{\overline{\sqrt{(\log f)''(z_0)}}}{|\sqrt{(\log f)''(z_0)}|}=\pm e^{\pm i\pi/4}i  \frac{|\sqrt{(\log f)''(z_0)}|}{\sqrt{(\log f)''(z_0)}}.
 \end{equation*}
The two signs can be chosen arbitrarily and independently.  This proves that there are two different arcs orthogonal at $z_0$. The curve $\partial\Omega$ does not have an inflection point at $z_0$ when the quadratic term in the Taylor expansion of $\gamma(t)$ is non-zero, i.e.,
\begin{equation*}
	{\rm Re}\bigg( e^{\pm i\pi/4} \frac{(\log f)'''(z_0)}{(\log f)''(z_0)^{3/2}}\bigg)\neq 0.
\end{equation*}
The proof is now complete.


\begin{thebibliography}{}

\bibitem{Ber}
D. N. Bernstein, \emph{The number of roots of a system of equations} (Russian), Funkcional. Anal. i Prilo\v{z}en, 9 (1975), 1-4.



\bibitem{Ble}
P. M. Bleher, Y. Homma, L. L. Ji, R. K. W. Roeder, \emph{Counting zeroes of harmonic rational functions and its application to gravitational lensing}, Int. Math. Res. Notices (2014) {\bf 2014} (8): 2245-2264.

\bibitem{Coo}
J. L. Coolidge, \emph{A Treatise on Algebraic Plane Curves}, Dover, (1959), 10.

\bibitem{DHL}
P. Duren, W. Hengartner, R. S. Laugesen, \emph{The argument principle for harmonic functions}, Amer. Math. Monthly, 103 (1996), 411-415.

\bibitem{Gey}
L. Geyer, \emph{Sharp bounds for the valence of certain harmonic polynomials}, Proc. AMS, 136 (2008), 549-555.

\bibitem{HLLM}
J. D. Hauenstein, A. Lerario, E. Lundberg, D. Mehta, \emph{Experiments on the zeros of harmonic polynomials using certified counting}, Exp. Math., 24 (2015).

\bibitem{K-S}
D. Khavinson, G. Swi\c{a}tek, \emph{On a maximal number of zeros of certain harmonic polynomials}, Proc. Amer. Math. Soc., 131 (2003), 409-414.

\bibitem{harmonious}
D. Khavinson, G. Newmann, \emph{From the fundamental theorem of algebra to astrophysics: a ''harmonious'' path}, Notices Amer. Math. Soc., Vol. 55, Issue 6, 2008, 666-675

\bibitem{LLL}
S-Y. Lee, A. Lerario, E. Lundberg, \emph{Remarks on Wilmshurst's Theorem}, Indiana Univ. Math. J. {\bf 64} No. 4 (2015), 1153–1167

\bibitem{LL}
A. Lerario, E. Lundberg, \emph{On the zeros of random harmonic polynomials: the truncated model}, preprint (2015), arXiv:1507.01041

\bibitem{Li-Wei} W. V. Li, A. Wei, \emph{On the expected number of zeros of a random harmonic polynomial
}, Proc. AMS, {\bf 137} 1 (2009) 195–204

\bibitem{LSL}
R. Luce, O. S\`{e}te, J. Liese, \emph{A note on the maximum number of zeros of $r(z) - \ol{z}$}, Comput. Methods and Funct. Theory, 15 (2014) 1617-9447


\bibitem{Ber2} F. Minding, \emph{Ueber die Bestimmung des Grades einer durch Elimination hervorgehenden Gleichung}, J. Reine Angew. Math. 22 (1841), 178-183

\bibitem{SS02}
T. Sheil-Small, Complex Polynomials, Cambridge Studies in Advanced Mathematics {\bf 73}, Cambridge University Press (2002). MR 2004b:30001.


\bibitem{Ber3} B. Sturmfels, Solving Systems of Polynomial Equations, CBMS Regional Conference Series in Mathematics, 97
(2002) 152 pp


\bibitem{ST20}
T. J. Suffridge, J. W. Thompson, \emph{Local behavior of harmonic mappings}, Complex Variables Theory Appl. {\bf 41} (2000), 63-80. MR 2001a:30019.

\bibitem{Wil}
A. S. Wilmshurst, \emph{Complex Harmonic Mappings and the Valence of Harmonic Polynomials}, D. Phil. thesis, Univ. of York, U.K., 1994.

\bibitem{Wil98} A.S. Wilmshurst, \emph{The valence of harmonic polynomials}, Proc. Amer. Math. Soc. {\bf 126} (1998), 2077-2081 MR {\bf 98h}:30029


\end{thebibliography}
\end{document}